\newtheorem{theorem}{Theorem}[section]
\newtheorem{proposition}[theorem]{Proposition}
\newtheorem{lemma}[theorem]{Lemma}
\theoremstyle{definition}
\newtheorem{definition}[theorem]{Definition}
\newtheorem{remark}[theorem]{Remark}
\newtheorem*{OTP*}{Organization of this paper}
\newtheorem*{Ack*}{Acknowledgements}
\begin{document}


\title[]
{Special Lagrangian and deformed Hermitian Yang--Mills on tropical manifold}


\author{Hikaru Yamamoto}
\address{Department of Mathematics, Faculty of Science, Tokyo University of Science, 1-3 Kagurazaka, Shinjuku-ku, Tokyo 162-8601, Japan}
\email{hyamamoto@rs.tus.ac.jp}




\begin{abstract}
From string theory, the notion of deformed Hermitian Yang--Mills connections has been introduced 
by Mari\~no, Minasian, Moore and Strominger \cite{MarinoMinasianMooreStrominger}. 
After that, Leung, Yau and Zaslow \cite{LeungYauZaslow} proved that 
it naturally appears as mirror objects of special Lagrangian submanifolds via Fourier--Mukai transform between dual torus fibrations. 
In their paper, some conditions are imposed for simplicity. 
In this paper, data to glue their construction on tropical manifolds are proposed and 
a generalization of the correspondence is proved 
without the assumption that the Lagrangian submanifold is a section of the torus fibration. 
\end{abstract} 


\keywords{tropical geometry, special Lagrangian submanifold, deformed Hermitian Yang--Mills connection, mirror symmetry}


\subjclass[2010]{53D37, 53C38}


\thanks{This work was supported by JSPS KAKENHI Grant Number 16H07229}



\maketitle

\section{Introduction}\label{intro}
Leung, Yau and Zaslow \cite{LeungYauZaslow} studied 
{\it deformed Hermitian Yang--Mills connections}. 
From string theory, the notion had been introduced by Mari\~no, Minasian, Moore and Strominger \cite{MarinoMinasianMooreStrominger}. 
Let $W$ be a K\"ahler manifold with a K\"ahler form $\tilde{\omega}$ and 
$\mathcal{L}\to W$ be a complex line bundle  with a Hermitian metric $h$. 

\begin{definition}
Fix a constant $\theta\in\mathbb{R}$. 
A Hermitian connection $D$ on $\mathcal{L}$ is called a deformed Hermitian Yang--Mills connection with phase $e^{\sqrt{-1}\theta}$ if 
\begin{equation}\label{defofdHYM}
F^{(0,2)}=0\quad\text{and}\quad \mathop{\text{Im}}\left(e^{-\sqrt{-1}\theta}(\tilde{\omega}+F)^{m}\right)=0,
\end{equation}
where $F^{(0,2)}$ is the $(0,2)$-part of the curvature 2-form $F$ of $D$ and 
$m$ is the complex dimension of $W$. 
\end{definition}

The former condition of \eqref{defofdHYM} is equivalent to that $D$ defines a holomorphic structure on $\mathcal{L}$, and 
such a connection is said to be {\it integrable}. 
Additionally, when $m=2$ and $\theta=0$, the latter condition coincides with 
$\tilde{\omega}\wedge F=0$ by a simple computation 
$(\tilde{\omega}+F)^2=(\tilde{\omega}^2+F^2)+2\tilde{\omega}\wedge F$ and the fact that $F$ is purely imaginary. 
This is just the ordinary definition of Hermitian Yang--Mills connections. 
However, for higher dimensional case, it is different from that, 
and is more complicated, especially non-linear. 

In the paper \cite{LeungYauZaslow}, it is proved that the deformed Hermitian Yang-Mills connections naturally appear 
as mirror objects of special Lagrangian submanifolds via Fourier--Mukai transform 
between dual torus fibrations. 
A {\it special Lagrangian submanifold} is a half-dimensional submanifold in 
an almost Calabi--Yau manifold $X$ with a K\"ahler form $\omega$ and 
a holomorphic volume form $\Omega$ which is not necessary to be parallel. 

\begin{definition}
Fix a constant $\theta\in\mathbb{R}$. 
A real $m$ dimensional submanifold $L$ in $X$ is called 
a special Lagrangian submanifold with phase $e^{\sqrt{-1}\theta}$ if 
\begin{equation}\label{defofsLag}
\omega|_{L}=0\quad\text{and}\quad \mathop{\text{Im}}\left(e^{-\sqrt{-1}\theta}\Omega\right)|_{L}=0,
\end{equation}
where $m$ is the complex dimension of $X$. 
\end{definition}

The former condition of \eqref{defofsLag} requires $L$ to be a Lagrangian submanifold. 
With the latter condition, $L$ becomes a calibrated submanifold with respect to $\mathop{\text{Re}}(e^{-\sqrt{-1}\theta}\Omega)$ 
as the original definition by Harvey and Lawson \cite{HarveyLawson}. 
Especially, $L$ is volume minimizing in its homology class. 
Special Lagrangian submanifolds acquired physical importance in the context of mirror symmetry by 
Strominger, Yau and Zaslow \cite{StromingerYauZaslow}. 

Let us return to the brief introduction of the work of Leung, Yau and Zaslow \cite{LeungYauZaslow}. 
In their paper, two conditions are imposed for simplicity. 
The one is for ambients and the another is for submanifolds. 
For ambient spaces $X$ and $W$, they assume the following. 
Let $B$ be an open subset in $\mathbb{R}^{m}$ and $\phi$ be a smooth convex function on $B$. 
Define $X$ by $X=TB/\Lambda$, where $\Lambda=B\times\mathbb{Z}^{m}$ is the lattice bundle in the tangent bundle $TB=B\times\mathbb{R}^{m}$. 
Then, this gives a trivial torus fibration $f:X\to B$. 
Let $y_{j}$ be the standard coordinates on each torus fiber $T^{m}:=f^{-1}(x)$. 
Then, a complex structure on $X$ is given by coordinates $z_{i}:=x_{i}+\sqrt{-1}y_{i}$. 
Moreover, a holomorphic volume form $\Omega$ and a K\"ahler form $\omega$ on $X$ are defined by
$\Omega:=dz_{1}\wedge\cdots\wedge dz_{m}$ and 
\[\omega:=\frac{\sqrt{-1}}{2}\sum_{i,j=1}^{m}\phi_{ij}dz_{i}\wedge d\bar{z}_{j}, \]
where $\phi_{ij}$ is the Hessian matrix of $\phi$. 
Then, $(X,\omega,\Omega)$ is an almost Calabi--Yau manifold. 
In this setting, we get another almost Calabi--Yau manifold $W$ 
as $W=T^{*}B/\Lambda^{*}$, where $\Lambda^{*}=B\times(\mathbb{Z}^{m})^{*}$ is the dual lattice bundle, with a trivial torus fibration $\tilde{f}:W\to B$. 
Put $\tilde{x}_{i}:=\partial \phi/\partial x_{i}$, the {\it Legendre transform} of $\phi$. 
Denote the standard coordinates on each dual torus fiber $\tilde{T}^{m}:=\tilde{f}^{-1}(x)$ by $\tilde{y}_{i}$ and 
the inverse matrix of $\phi_{ij}$ by $\phi^{ij}$. 
Then, a complex structure on $W$ is given by coordinates $\tilde{z}_{i}:=\tilde{x}_{i}+\sqrt{-1}\tilde{y}_{i}$, 
and we get a holomorphic volume form $\tilde{\Omega}:=d\tilde{z}_{1}\wedge\cdots\wedge d\tilde{z}_{m}$
and a K\"ahler form 
\[\tilde{\omega}:=\frac{\sqrt{-1}}{2}\sum_{i,j=1}^{m}\phi^{ij}d\tilde{z}_{i}\wedge d\bar{\tilde{z}}_{j}\] 
on $W$. Clearly, $(W,\tilde{\omega},\tilde{\Omega})$ is also an almost Calabi--Yau manifold. 
This situation is called the {\it semi-flat} case, and the pair $(X,\omega,\Omega)$ and $(W,\tilde{\omega},\tilde{\Omega})$ 
is called a {\it mirror pair} or {\it dual torus fibrations}. 
For submanifolds, they assume that $L$ in $X$ is written as a graph of a section $Y$ of the torus fibration $f:X\to B$. 
We denote it by $L(Y):=\{\,(x,Y(x))\in X\mid x\in B\,\}$. 

In this setting, their idea and theorem are as follows. 
At each point $x\in B$, we get a point $Y(x) \in T^{m}=f^{-1}(x)$, 
and identify $T^{m}$ with $\mathop{\text{Hom}}(\pi_{1}(\tilde{T}^{m}),S^{1})$ canonically. 
Then, by the correspondence between monodromy representations and flat bundles, 
$Y(x)$ determines a flat Hermitian connection, denoted by $D^{Y(x)}$, on 
the trivial line bundle $\underline{\mathbb{C}}\to \tilde{T}^{m}=\tilde{f}^{-1}(x)$ with the standard Hermitian metric. 
Then, the family $D^{Y(x)}$ along $x\in B$ gives 
a Hermitian connection (which lacks the flatness), denoted by $D^{Y}$, 
on the trivial line bundle $\underline{\mathbb{C}}\to W$ with the standard Hermitian metric. 
Explicitly, it is written as 
\[D^{Y}=d+\sqrt{-1}\sum_{j-1}^{m}Y^{j}d\tilde{y}_{j}.\]
Then, they proved the following equivalence. 

\begin{theorem}[\cite{LeungYauZaslow}]\label{LYZthm}
$L(Y)$ is a Lagrangian submanifold in $(X,\omega)$ if and only if $D^{Y}$ is an integrable connection 
on $\underline{\mathbb{C}}\to W$. 
Furthermore, $L(Y)$ is a special Lagrangian submanifold in $(X,\omega,\Omega)$ with phase $e^{\sqrt{-1}\theta}$ 
if and only if $D^{Y}$ is a deformed Hermitian Yang--Mills connection on $\underline{\mathbb{C}}\to (W,\tilde{\omega})$ with phase $e^{\sqrt{-1}\theta}$. 
\end{theorem}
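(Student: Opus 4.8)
The plan is to reduce both equivalences to linear algebra involving the Jacobian matrix $Y'=(\partial Y^{j}/\partial x_{k})_{j,k}$ of the section $Y$. First I would compute the curvature of $D^{Y}$. Since the bundle is a line bundle, $F=dA$ with $A=\sqrt{-1}\sum_{j}Y^{j}d\tilde{y}_{j}$, and because each $Y^{j}$ is a function on $B$ only, the chain rule through the Legendre transform $\tilde{x}_{i}=\partial\phi/\partial x_{i}$ (so that $\partial x_{k}/\partial\tilde{x}_{i}=\phi^{ki}$) gives
\begin{equation*}
F=\sqrt{-1}\sum_{i,j}C^{ij}\,d\tilde{x}_{i}\wedge d\tilde{y}_{j},\qquad C^{ij}:=\sum_{k}\phi^{ik}\frac{\partial Y^{j}}{\partial x_{k}},
\end{equation*}
so that the matrix $C=(C^{ij})$ equals $\phi^{-1}(Y')^{T}$. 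In parallel, rewriting the Kähler forms in real coordinates yields the clean expressions $\omega=\sum_{i,j}\phi_{ij}\,dx_{i}\wedge dy_{j}$ and $\tilde{\omega}=\sum_{i,j}\phi^{ij}\,d\tilde{x}_{i}\wedge d\tilde{y}_{j}$, which I will use throughout.

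For the first equivalence, I would evaluate $\omega$ on the tangent frame $e_{k}=\partial_{x_{k}}+\sum_{j}(\partial Y^{j}/\partial x_{k})\partial_{y_{j}}$ of $L(Y)$; a short computation gives $\omega(e_{k},e_{l})=(\phi Y')_{kl}-(\phi Y')_{lk}$, so $L(Y)$ is Lagrangian if and only if the matrix $\phi Y'$ is symmetric, i.e. $\phi Y'=(Y')^{T}\phi$. On the mirror side, converting $d\tilde{x}_{i},d\tilde{y}_{i}$ into $d\tilde{z}_{i},d\bar{\tilde{z}}_{i}$ and extracting the $(0,2)$-part shows that $F^{(0,2)}$ is a nonzero multiple of $\sum_{i,j}(C^{ij}-C^{ji})\,d\bar{\tilde{z}}_{i}\wedge d\bar{\tilde{z}}_{j}$, so $F^{(0,2)}=0$ if and only if $C=\phi^{-1}(Y')^{T}$ is symmetric, i.e. $\phi^{-1}(Y')^{T}=Y'\phi^{-1}$. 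Since $\phi$ and $\phi^{-1}$ are symmetric, conjugating either identity by $\phi$ produces the other, so the two conditions coincide and the first equivalence follows.

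For the second equivalence I would compute the two top-degree quantities and compare their imaginary parts. Restricting $\Omega$ to $L(Y)$ via $dz_{j}|_{L}=\sum_{k}(\delta_{jk}+\sqrt{-1}\,\partial Y^{j}/\partial x_{k})\,dx_{k}$ gives
\begin{equation*}
\Omega|_{L}=\det\!\left(I+\sqrt{-1}\,Y'\right)\,dx_{1}\wedge\cdots\wedge dx_{m}.
\end{equation*}
On the dual fibration, using the identity $\eta^{m}=m!\det(A)\,d\tilde{x}_{1}\wedge d\tilde{y}_{1}\wedge\cdots\wedge d\tilde{x}_{m}\wedge d\tilde{y}_{m}$ for a two-form $\eta=\sum_{i,j}A_{ij}\,d\tilde{x}_{i}\wedge d\tilde{y}_{j}$, together with the fact that $\tilde{\omega}+F$ corresponds to the matrix $\phi^{-1}(I+\sqrt{-1}\,(Y')^{T})$, I obtain
\begin{equation*}
(\tilde{\omega}+F)^{m}=\frac{m!}{\det\phi}\,\det\!\left(I+\sqrt{-1}\,Y'\right)\,d\tilde{x}_{1}\wedge d\tilde{y}_{1}\wedge\cdots\wedge d\tilde{x}_{m}\wedge d\tilde{y}_{m},
\end{equation*}
where I used $\det((Y')^{T})=\det(Y')$. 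Because $\det\phi>0$ by convexity of $\phi$, the scalar $m!/\det\phi$ is real and positive, so $\mathrm{Im}\!\left(e^{-\sqrt{-1}\theta}(\tilde{\omega}+F)^{m}\right)=0$ holds exactly when $\mathrm{Im}\!\left(e^{-\sqrt{-1}\theta}\det(I+\sqrt{-1}\,Y')\right)=0$, which is precisely the special Lagrangian condition $\mathrm{Im}(e^{-\sqrt{-1}\theta}\Omega)|_{L}=0$. Combined with the first equivalence, which supplies the Lagrangian/integrability half, this proves the theorem.

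I expect the main obstacle to be the curvature computation together with its type decomposition: one must correctly pass from $dY^{j}$ on $B$ to the coframe $d\tilde{x}_{i}$ through the Legendre transform, and then carefully split the real two-form $F$ into $(p,q)$-types to isolate $F^{(0,2)}$. The second, more conceptual, subtlety is the matrix identity showing that symmetry of $\phi Y'$ is equivalent to symmetry of $\phi^{-1}(Y')^{T}$; this is where the symmetry of the Hessian is essential, and it is exactly the step that makes the Lagrangian and integrability conditions literally the same equation.
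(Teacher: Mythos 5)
Your proposal is correct: every computation checks out (the expressions $\omega=\sum\phi_{ij}dx_i\wedge dy_j$, $\tilde\omega=\sum\phi^{ij}d\tilde x_i\wedge d\tilde y_j$, the curvature matrix $C=\phi^{-1}(Y')^{T}$, the equivalence of the symmetry of $\phi Y'$ with that of $\phi^{-1}(Y')^{T}$, and the determinant identity giving $(\tilde\omega+F)^m=\frac{m!}{\det\phi}\det(I+\sqrt{-1}Y')\,d\tilde x_1\wedge d\tilde y_1\wedge\cdots$). However, your route is organized differently from the paper's. The paper never proves Theorem \ref{LYZthm} by directly matching the two sides; it obtains it as the $k=m$ specialization of Theorem \ref{main}, whose proof factors each side through an intermediate condition on the base data: Lagrangian $\Leftrightarrow$ ``$Y$ is locally gradient'' (i.e.\ the $1$-form $\eta=-g(Y,\cdot)$ is closed, Proposition \ref{Lagpt}) $\Leftrightarrow$ integrable (Proposition \ref{integpt}); and special Lagrangian $\Leftrightarrow$ $\arg\det[\mathcal{W}(u)|\mathcal{Z}]=\theta_0$ $\Leftrightarrow$ dHYM (Propositions \ref{sLagpt} and \ref{dHYMprop}). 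Your direct matrix comparison --- symmetry of $\phi Y'$ versus symmetry of $\phi^{-1}(Y')^{T}$, and $\det\bigl(\phi^{-1}(I+\sqrt{-1}(Y')^{T})\bigr)=\det(I+\sqrt{-1}Y')/\det\phi$ --- is shorter and makes the ``mirror'' identification of the two equations completely transparent in the graph case, and is essentially the original Leung--Yau--Zaslow computation. What the paper's factorization buys is generality: by isolating the base condition on $Y$ (closedness of $g(Y,\cdot)$, respectively the argument of an augmented determinant $\det[\mathcal{W}|\mathcal{Z}]$), the same argument survives when $L$ is no longer a graph but fibers over a $k$-dimensional $B(V)$ with $k<m$, where your global Jacobian $Y'$ and the clean identity $C=\phi^{-1}(Y')^{T}$ are no longer available. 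For the theorem as stated (the semi-flat graph case), your proof is complete.
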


Since $L=L(Y)$ is a graph of a section, 
$f(L(Y))$ (the shadow of $L(Y)$ by the projection $f:X\to B$) is $B$ itself. 
Hence, the dimension of it is $\dim_{\mathbb{R}}B$. 
For the case when the dimension of the shadow of $L\subset X$, denoted by $k$, is less than $\dim_{\mathbb{R}}B$, 
Leung, Yau and Zaslow \cite{LeungYauZaslow} explain that 
we get a complex submanifold $C\subset W$ with complex dimension $k$ and 
a deformed Hermitian Yang--Mills connection on a trivial Hermitian line bundle $\underline{\mathbb{C}}$ over $C$, not $W$, 
by explicit calculation when $m=3$. 
For general $m$ and $k$, Chan \cite{Chan} explains 
how to generalize the construction of $D^{Y}$ from $L(Y)$, 
and prove the equivalence of the conditions of Lagrangian submanifolds and integrable connections. 
Furthermore, the base space $B$ treated in \cite{Chan} is a tropical  manifold. 

In this paper, as Chan has done, 
we take tropical  manifolds as base spaces and 
we glue the local construction due to Leung, Yau and Zaslow \cite{LeungYauZaslow} together 
on the tropical manifold. 
The way to glue is similar to the one to construct a global section of a sheaf 
from local sections with compatibility. 
Some basic calculation have been done in \cite{Chan} however, 
this paper has two improvements. 
One is that the data to construct submanifolds and connections are explicitly mentioned. 
The another is that we prove the equivalence 
of the conditions of special Lagrangian submanifolds and deformed Hermitian Yang--Mills connections in this setting. 

In the following, we state the main theorem in this paper omitting some definitions. 
Let $(B,\mathcal{D}, K)$ be an $m$-dimensional tropical  manifold $B$ 
with a tropical structure $\mathcal{D}=\{\,(U_{\lambda},\psi_{\lambda})\,\}_{\lambda\in\Lambda}$ and 
a convex multi-valued function $K=\{\,K_{\lambda}\,\}_{\lambda\in\Lambda}$ on $B$. 
Then, we get the associated mirror almost Calabi--Yau pair $(X(B),\omega,\Omega)$ and $(W(B),\tilde{\omega},\tilde{\Omega})$, 
as in Section \ref{setting}. 
Fix an integer $k$ with $0\leq k \leq m$. 
Then, we get a fiber bundle $\mathcal{F}(k,m)\to B$ 
such that the fiber is the set of $k$-dimensional rational affine linear subspace in $\mathbb{R}^{m}$. 
Fix a constant section $V$ of $\mathcal{F}(k,m)\to B$. 
We assume the existence of such $V$. 
The definition of $\mathcal{F}(k,m)$ and the {\it constant section} are given in Section \ref{bundle}. 
From $V$, we construct a $k$-dimensional submanifold $B(V)$ in $B$, as explained in Section \ref{cdata}. 
Fix a vector field $Y$ on $B(V)$. 
We call these (inductively taken) data $(k,V,Y)$ the {\it construction data}. 
In Section \ref{LAG}, we give an explicit way to construct 
an $m$-dimensional submanifold $L(V,Y)$ in $X(B)$ from $(k,V,Y)$ such that 
the dimension of the image of it by $f:X(B)\to B$ is $k$. 
In Section \ref{ccc}, we give an explicit way to construct 
a complex submanifold $C(V)$ in $W(B)$ with complex dimension $k$ and 
a Hermitian connection $D^{Y}$ on the trivial line bundle $\underline{\mathbb{C}}$ over $C(V)$ with the standard Hermitian metric. 
We give a K\"ahler structure on $C(V)$ by the restriction of $\tilde{\omega}$. 
Then, the main theorem of this paper is the following. 

\begin{theorem}\label{main}
$L(V,Y)$ is a Lagrangian submanifold in $(X(B),\omega)$ if and only if $D^{Y}$ is an integrable connection 
on $\underline{\mathbb{C}}\to C(V)$. 
Furthermore, $L(V,Y)$ is a special Lagrangian submanifold in $(X(B),\omega,\Omega)$ with phase $e^{\sqrt{-1}\theta}$ 
if and only if $D^{Y}$ is a deformed Hermitian Yang--Mills connection on $\underline{\mathbb{C}}\to (C(V),\tilde{\omega})$ with phase $e^{\sqrt{-1}\theta}$. 
\end{theorem}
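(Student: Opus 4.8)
The key observation is that both constructions—the special Lagrangian $L(V,Y)$ and the dHYM connection $D^Y$—are built chart-by-chart from the tropical structure $\mathcal{D}=\{(U_\lambda,\psi_\lambda)\}$ and then glued. Since the conditions in question (Lagrangian, special Lagrangian, integrable, dHYM) are all \emph{local} differential conditions expressed by pointwise vanishing of certain forms, it suffices to verify the equivalence on each chart $U_\lambda$ and check that the gluing data respect these conditions. So the first step is to fix a chart $U_\lambda$ with coordinates $\psi_\lambda$, in which $B(V)$ is cut out as a rational affine-linear $k$-plane determined by the constant section $V$, and $X(B)$, $W(B)$ restrict to the semi-flat model $TU_\lambda/\Lambda$ and $T^*U_\lambda/\Lambda^*$ with the Kähler and holomorphic volume forms induced by $K_\lambda$.

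**The second step is the local analysis on a single chart, where the novelty over Theorem~\ref{LYZthm} lies.** Here $L(V,Y)$ is no longer a full section of $f$ but projects onto the $k$-dimensional $B(V)\subset U_\lambda$; correspondingly $D^Y$ lives on a line bundle over the $k$-dimensional complex submanifold $C(V)\subset W(B)$ rather than over all of $W(B)$. I would first compute $\omega|_{L(V,Y)}$ in adapted coordinates, splitting the tangent space of $L(V,Y)$ into directions along $B(V)$ and directions along the fiber torus determined by $V$, and show that $\omega|_{L(V,Y)}=0$ is equivalent, via the explicit formula for $D^Y=d+\sqrt{-1}\sum_j Y^j\,d\tilde y_j$ restricted to $C(V)$, to $F^{(0,2)}=0$ on $\underline{\mathbb{C}}\to C(V)$. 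This mirrors the Leung–Yau–Zaslow computation but with the Hessian $\phi_{ij}$ and its inverse $\phi^{ij}$ restricted to the subspace directions; the block structure of $\phi_{ij}$ relative to the splitting induced by $V$ must be handled carefully. For the second equivalence, I would compute $\mathop{\text{Im}}(e^{-\sqrt{-1}\theta}\Omega)|_{L(V,Y)}$ and match it against $\mathop{\text{Im}}(e^{-\sqrt{-1}\theta}(\tilde\omega+F)^k)$ on $C(V)$, where the exponent is now $k=\dim_{\mathbb{C}}C(V)$ rather than $m$, using the restriction of $\tilde\Omega$ to $C(V)$ and the compatibility between $\Omega|_{L(V,Y)}$ and the induced volume form.

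**The third step is to verify that the equivalences are compatible with the transition functions of the tropical structure,** so that the pointwise local equivalences assemble into the global statement over all of $B$. Since $L(V,Y)$, $C(V)$, and $D^Y$ are \emph{defined} by gluing the local pieces (the data $(k,V,Y)$ being taken so that the constructions in Sections~\ref{cdata}, \ref{LAG}, \ref{ccc} patch), the transition functions act by integral affine transformations on the base and by the dual representation on the fibers; these preserve both $\omega$, $\Omega$ and $\tilde\omega$, $F$ up to the expected Fourier–Mukai duality. Thus the vanishing of $\omega|_{L(V,Y)}$ on overlaps is well defined and matches the vanishing of $F^{(0,2)}$, and likewise for the phase conditions.

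**The main obstacle I expect is the local computation of the second (phase) equivalence when $k<m$,** specifically matching $\mathop{\text{Im}}(e^{-\sqrt{-1}\theta}\Omega)|_{L(V,Y)}=0$ with the $k$-fold dHYM condition on $C(V)$. In the full-section case $k=m$ this is the clean $m\times m$ determinant identity of Leung–Yau–Zaslow, but for $k<m$ the restriction of $\Omega=dz_1\wedge\cdots\wedge dz_m$ to the $m$-dimensional $L(V,Y)$—whose projection is only $k$-dimensional—requires integrating out the $m-k$ transverse torus directions and correctly identifying the induced holomorphic volume form on $C(V)$ with $\tilde\Omega|_{C(V)}$ up to the factor coming from $\det\phi_{ij}$ on the complementary block. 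I anticipate this reduces to a linear-algebra identity relating the minors of $\phi_{ij}$ indexed by $V$ to those of $\phi^{ij}$ on the complementary indices (a Jacobi/Schur complement identity), after which the matching of imaginary parts follows as in the semi-flat case.
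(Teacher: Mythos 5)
Your proposal follows essentially the same route as the paper: the actual proof of Theorem \ref{main} is exactly the chart-by-chart computation you describe, carried out in the natural parametrization of $B(V)$ and adapted coordinates $(u,t)$ on $L(V,Y)$ and $(u,v)$ on $C(V)$, with both phase conditions reduced to $\arg\det[\mathcal{W}(u)|\mathcal{Z}]=\theta_{0}$ via the block-determinant identity \eqref{detbij} (the linear-algebra step you anticipate). The only organizational difference is that the paper pivots both sides through a common condition on the data $(k,V,Y)$ ($Y$ locally gradient for the first equivalence, equation \eqref{MA} for the second) rather than matching $\omega|_{L(V,Y)}$ with $F^{(0,2)}$ directly, and the gluing compatibilities of your third step are already built into the constructions of Sections \ref{cdata}--\ref{ccc}.
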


The proof will be done by converting each property of $L(V,Y)$ and $(D^{Y},C(V))$ to one of $(k,V,Y)$. 
The correspondence is summarized as Table \ref{tab2}. 
The first line (the triviality) is proved by Proposition \ref{normalprop} and \ref{lem2}. 
The middle and final line are just consequences from Proposition \ref{Lagpt} and \ref{integpt}, and Proposition \ref{sLagpt} and \ref{dHYMprop}, 
respectively. 

\renewcommand{\arraystretch}{1.3}
\begin{table}[h]
\caption{Correspondence}
\label{tab2}
\begin{tabular}{|c|c|c|}
\hline
 $L(V,Y)$ in $X(B)$ is & Data $(k,V,Y)$ & $D^{Y}$ on $C(V)$ ($\subset W(B)$) is\\
 \hline \hline
  trivial & $Y$ is normal & trivial  \\
\hline
Lagrangian & $Y$ is locally gradient & integrable  \\
\hline
special Lagrangian & $Y$ satisfies \eqref{MA} & deformed Hermitian Yang--Mills\\
\hline
\end{tabular}
\end{table}
\renewcommand{\arraystretch}{1}

To state our main theorem, we assumed the existence of a constant section $V$ of $\mathcal{F}(k,m)\to B$. 
When $k=m$, then $\mathcal{F}(m,m)=B\times \{\,\mathbb{R}^{m}\,\}$, and 
there exists only one constant section $V$ of $\mathcal{F}(m,m)\to B$, that is, $V(x)=\mathbb{R}^{m}$. 
For such $V$, we see that $B(V)$ becomes $B$ itself and 
Theorem \ref{main} is reduced to Theorem \ref{LYZthm}. 
However, when $k<m$, the existence of constant sections of $\mathcal{F}(k,m)\to B$ is not assured. 
It might depend on the global geometry of $(B,\mathcal{D}, K)$, 
the tropical  manifold with a convex multi-valued function. 

Finally, we mention some related studies of deformed Hermitian Yang--Mills connections, 
though it is relatively rare. 
For example, Jacob and Yau \cite{JacobYau} considered an analogue of Lagrangian mean curvature flows for Hermitian connections. 
Collins, Jacob and Yau \cite{CollinsJacobYau} gave a priori estimates and proved the existence of 
deformed Hermitian Yang--Mills connections under some assumptions and 
found some obstructions to the existence. 
By using a different method from \cite{CollinsJacobYau}, 
Pingali \cite{Pingali} gave similar a priori estimates for deformed Hermitian Yang--Mills connections. 
More recently, Collins, Xie and Yau \cite{CollinsXieYau} provided an introduction to deformed Hermitian Yang--Mills connections and 
gave a new Chern number inequality under the assumption that a deformed Hermitian Yang--Mills connection exists. 

\begin{OTP*}
This section, Section \ref{intro}, is an introduction to main subjects in this paper. 
The notions of deformed Hermitian Yang--Mills connections 
and special Lagrangian submanifolds are defined in this section. 
The idea of Leung, Yau and Zaslow \cite{LeungYauZaslow} and 
their theorem are reviewed here, 
and the main theorem of this paper is also given. 
Section \ref{setting} is just a brief introduction to the tropical geometry which is a basic material of this paper. 
Tropical manifolds and associated dual torus fibrations are explained here. 
In Section \ref{bundle}, a fiber bundle on a tropical manifold with a convex multi-valued function 
is defined. A section of this bundle is one of important ingredients of our construction. 
In section \ref{cdata}, the data to construct a special Lagrangian submanifold and deformed Hermitian Yang--Mills connection is introduced. 
In Section \ref{LAG}, we give a procedure to construct a Lagrangian submanifold from the data and 
give a condition such that it is a special Lagrangian submanifold. 
Section \ref{ccc} is the mirror side argument of Section \ref{LAG}. 
A complex submanifold and connection over it are constructed, and 
a condition such that it is a deformed Hermitian Yang--Mills connection is given. 
At the end of this section, we give a proof of Theorem \ref{main}. 
\end{OTP*}

\begin{Ack*}
This work was based on private communications with Professor A. Futaki, 
so the author would like to thank him for many suggestions and stimulating discussions. 
\end{Ack*}
\section{Setting}\label{setting}
In this section, following \cite{Gross}, we explain some terminologies used in semi-flat mirror symmetry, 
the toy version of mirror symmetry. 

\begin{definition}
A {\it tropical  manifold} is a manifold $B$ with a differential structure 
$\mathcal{D}=\{\,(U_{\lambda},\psi_{\lambda})\,\}_{\lambda\in\Lambda}$ 
such that transition functions $\psi_{\lambda}\circ \psi_{\mu}^{-1}$ 
lie in $\mathbb{R}^{m}\rtimes GL(\mathbb{Z}^{m})$. 
The differential structure $\mathcal{D}$ is called a tropical structure and 
local coordinates $(x_{1},\dots,x_{m})$ on $U_{\lambda}$ defined by $\psi_{\lambda}:U_{\lambda}\to\mathbb{R}^{m}$ 
are called tropical coordinates. 
\end{definition}

\begin{definition}
Let $B$ be a tropical  manifold with a tropical structure $\mathcal{D}=\{\,(U_{\lambda},\psi_{\lambda})\,\}_{\lambda\in\Lambda}$. 
A {\it multi-valued function} on $B$ is a collection of functions $K=\{\,K_{\lambda}\,\}_{\lambda\in\Lambda}$ 
such that each $K_{\lambda}$ is a smooth function on $U_{\lambda}$ and 
$K_{\lambda}-K_{\mu}$ is affine linear on $U_{\lambda}\cap U_{\mu}$ with respect to tropical coordinates. 
A multi-valued function $K$ is {\it convex} if the Hessian of $K_{\lambda}$ with respect to tropical coordinates is positive definite on $U_{\lambda}$ for all $\lambda\in\Lambda$.
\end{definition}

\begin{definition}
A Riemannian metric $g$ on a tropical  manifold $B$ 
with a tropical structure $\mathcal{D}=\{\,(U_{\lambda},\psi_{\lambda})\,\}_{\lambda\in\Lambda}$ is {\it Hessian} if 
there exists a convex multi-valued function $K=\{\,K_{\lambda}\,\}_{\lambda\in\Lambda}$ on $B$ such that 
\[g\left(\frac{\partial}{\partial x_{i}},\frac{\partial}{\partial x_{j}}\right)=\frac{\partial^2 K_{\lambda}}{\partial x_{i}\partial x_{j}}\]
on $U_{\lambda}$ for all $\lambda\in\Lambda$. 
\end{definition}

Let $(B,\mathcal{D}, K)$ be an $m$-dimensional tropical  manifold $B$ 
with a tropical structure $\mathcal{D}=\{\,(U_{\lambda},\psi_{\lambda})\,\}_{\lambda\in\Lambda}$ and 
a convex multi-valued function $K=\{\,K_{\lambda}\,\}_{\lambda\in\Lambda}$ on $B$. 
We denote by $g$ the Riemannian metric on $B$ associated with $K$. 
In what follows, we call the triplet $(B,\mathcal{D}, K)$ a tropical  manifold for short. 
Then, its {\it Legendre transform} is defined as follows. 
First, we take $B$ itself as the underlying differential manifold. 
Next, we define another tropical coordinates $(\tilde{x}_{1},\dots,\tilde{x}_{m})$ on $U_{\lambda}$ by 
\[\tilde{x}_{i}=\tilde{x}_{i}(x):=\frac{\partial K_{\lambda}}{\partial x_{i}}, \]
that is, the Legendre transform of $K_{\lambda}$, where 
$(x_{1},\dots,x_{m})$ are the original tropical coordinates on $U_{\lambda}$. 
Let $x'_{i}$ and $\tilde{x}'_{i}$ be the original tropical coordinates and its Legendre transforms on $U_{\mu}$, respectively. 
Since $K_{\lambda}-K_{\mu}$ is affine linear, there exist real constants $a_{i}$ and $c$ such that 
$K_{\lambda}=K_{\mu}+\sum_{i=1}^{m}a_{i}x_{i}+c$. 
Then, we have
\begin{equation}\label{change}
\tilde{x}_{i}=\frac{\partial K_{\lambda}}{\partial x_{i}}=\frac{\partial K_{\mu}}{\partial x_{i}}+a_{i}
=\sum_{j=1}^{m}\frac{\partial K_{\mu}}{\partial x'_{j}}\frac{\partial x'_{j}}{\partial x_{i}}+a_{i}
=\sum_{j=1}^{m}\frac{\partial x'_{j}}{\partial x_{i}}\tilde{x}'_{j}+a_{i}. 
\end{equation}
This together with $(\partial x'_{j}/\partial x_{i})_{ij}\in GL(\mathbb{Z}^{m})$ implies 
that $\mathcal{\tilde{D}}:=\{\,(U_{\lambda},\tilde{\psi}_{\lambda})\,\}_{\lambda\in\Lambda}$ 
with $\tilde{\psi}_{\lambda}=(\tilde{x}_{1},\dots,\tilde{x}_{m})$ is also a tropical structure on $B$. 
Define the function $\tilde{K}_{\lambda}$ on $U_{\lambda}$ by
\[\tilde{K}_{\lambda}(\tilde{x}):=\sum_{i=1}^{m}\tilde{x}_{i}x_{i}-K_{\lambda}(x). \]
Then, one can easily prove that $\tilde{K}:=\{\,\tilde{K}_{\lambda}\,\}_{\lambda\in\Lambda}$ is 
a convex multi-valued function on $B$ with respect to the tropical structure $\mathcal{\tilde{D}}$. 
We call the triplet $(B,\mathcal{\tilde{D}}, \tilde{K})$ the Legendre transform of $(B,\mathcal{D}, K)$. 

Let $(B,\mathcal{D}, K)$ be an $m$-dimensional tropical manifold. 
For a chart $U_{\lambda}$, a module $\Lambda|_{U_{\lambda}}$ over $\mathbb{Z}$ is defined by 
\[\Lambda|_{U_{\lambda}}:=\mathbb{Z}\frac{\partial}{\partial x_{1}}+\cdots+\mathbb{Z}\frac{\partial}{\partial x_{m}}.\]
Since $B$ is tropical, $\Lambda|_{U_{\lambda}}$ and $\Lambda|_{U_{\mu}}$ are compatible over $U_{\lambda}\cap U_{\mu}$. 
Thus, the lattice bundle $\Lambda\subset TB$ is defined by gluing these together. 
Then, the complex manifold $X(B)$ is defined by
\[X(B):=TB/\Lambda\]
with holomorphic coordinates $z_{i}:=x_{i}+\sqrt{-1}y_{i}$, where $y_{i}$ is the coefficient of $\partial/\partial x_{i}$ of an element in $T_{x}B$. 
We denote the projection map by $f:X(B)\to B$ and it is clear that each fiber is an $m$-dimensional torus $T^{m}$. 
Hence, $f:X(B)\to B$ is a torus fibration over $B$. 
Since $B$ is tropical, a holomorphic $(m,0)$-form defined by
\[\Omega:=dz_{1}\wedge \dots \wedge dz_{m}\]
is well-defined and nonvanishing. 
By using $K$ as potential functions, a K\"ahler form $\omega$ on $X(B)$ is defined by
\begin{equation}\label{symp}
\omega:=\frac{\sqrt{-1}}{2}\sum_{i,j}\frac{\partial^2 K_{\lambda}}{\partial x_{i}\partial x_{j}}dz_{i}\wedge d\overline{z}_{j}
\end{equation}
on each $U_{\lambda}$ and these are compatible on $U_{\lambda}\cap U_{\mu}$. 
One can easily check that 
\[\frac{\omega^{m}}{m!}=\left(\frac{\sqrt{-1}}{2}\right)^{m}(-1)^{\frac{m(m-1)}{2}}
\det\left(\frac{\partial^2 K_{\lambda}}{\partial x_{i}\partial x_{j}}\right)\Omega\wedge\bar{\Omega}. \]
Thus, $(X(B),\omega)$ is Ricci-flat if and only if every $K_{\lambda}$ satisfies 
\[\det \left(\frac{\partial^2 K_{\lambda}}{\partial x^{i}\partial x^{j}}\right)=\text{const},\]
the so-called real Monge--Amp\`ere equation. 
In this paper, we do not assume that $(X(B),\omega)$ is Ricci-flat. 
A $2m$-dimensional K\"ahler manifold with a nonvanishing holomorphic $(m,0)$-form is called an {\it almost Calabi--Yau} manifold. 
Hence, in our case, $(X(B),\omega,\Omega)$ is always an almost Calabi--Yau manifold. 

From $(B,\mathcal{D}, K)$, we can construct another almost Calabi--Yau manifold by using the cotangent bundle of $B$ as follows. 
As above, the dual lattice bundle $\Lambda^{*}\subset T^{*}B$ is defied by 
\[\Lambda^{*}|_{U_{\lambda}}:=\mathbb{Z} dx_{1}+\cdots+\mathbb{Z} dx_{m}\]
on each $U_{\lambda}$. 
Then, a symplectic manifold $W(B)$ is defined by
\[W(B):=T^{*}B/\Lambda^{*}\]
with the canonical symplectic form 
\[\tilde{\omega}:=\sum_{i}dx_{i}\wedge d\tilde{y}_{i}, \]
where $\tilde{y}_{i}$ is the coefficient of $d x_{i}$ of an element in $T^{*}_{x}B$. 
We denote the torus fibration by $\tilde{f}:W(B)\to B$. 
A complex structure on $W(B)$ is defined by coordinates 
$\tilde{z}_{i}:=\tilde{x}_{i}+\sqrt{-1}\tilde{y}_{i}$ on each $U_{\lambda}$, 
where $\tilde{x}_{i}=\tilde{x}_{i}(x):=\partial K_{\lambda}/\partial x_{i}$. 
A nonvanishing holomorphic $(m,0)$-form is also defined by 
\[\tilde{\Omega}:=d\tilde{z}_{1}\wedge\dots \wedge d\tilde{z}_{m}.\]
Then, $(W(B),\tilde{\omega},\tilde{\Omega})$ is also an almost Calabi--Yau manifold. 
We call $(X(B),\omega,\Omega)$ and $(W(B),\tilde{\omega},\tilde{\Omega})$ a {\it mirror pair} 
associated with $(B,\mathcal{D}, K)$. 
One side is also called the {\it dual torus fibration} of the other side. 
\section{A fiber bundle over $B$}\label{bundle}
Let $V$ be a $k$-dimensional affine linear subspace in $\mathbb{R}^{m}$ ($0\leq k \leq m$). 
In this paper, we say that $V$ is {\it rational} if there exist 
$\xi_{1},\dots,\xi_{k}$, $\zeta_{k+1},\dots,\zeta_{m}$ in $\mathbb{Z}^{m}$ and $a\in \mathbb{R}^{m}$ such that 
\[
\det(\xi_{1},\dots,\xi_{k}, \zeta_{k+1},\dots,\zeta_{m})=\pm 1\quad\text{and}\quad 
V=\mathbb{R}\xi_{1}+\dots+\mathbb{R}\xi_{k}+a. 
\]
Put $\overline{V}:=\mathbb{R}\xi_{1}+\dots+\mathbb{R}\xi_{k}$ and call it the linear part of $V$. 
We denote the set of all $k$-dimensional rational affine linear subspaces in $\mathbb{R}^{m}$ by 
$F(k,m)$. 
For $(b,A)\in \mathbb{R}^{m}\rtimes GL(\mathbb{Z}^{m})$, we define its action on $F(k,m)$ by 
\begin{equation}\label{action}
(b,A)\cdot V:=AV+b.
\end{equation}
One can easily check that $AV+b$ is also in $F(k,m)$. 

Let $(B,\mathcal{D}, K)$ be an $m$-dimensional tropical manifold. 
On $U_{\lambda}\cap U_{\mu}$, put 
\begin{equation}\label{Ab}
A(\lambda,\mu)_{i}^{j}:=\frac{\partial (\psi_{\mu}\circ \psi_{\lambda}^{-1})^{i}}{\partial x_{j}^{\lambda}}\quad\text{and}\quad
b(\lambda,\mu)^{j}:=\frac{\partial (K_{\lambda}-K_{\mu})}{\partial x_{j}^{\lambda}},
\end{equation}
where $(x_{i}^{\lambda})_{i=1}^{m}$ are tropical coordinates on $U_{\lambda}$. 
Then, by definition, $A(\lambda,\mu)_{i}^{j}$ and $b(\lambda,\mu)^{j}$ are constants and 
$A(\lambda,\mu):=(A(\lambda,\mu)_{i}^{j})\in GL(\mathbb{Z}^{m})$. 
Since 
\[A(\lambda,\mu)_{i}^{j}=\frac{\partial x_{i}^{\mu}}{\partial x_{j}^{\lambda}}=
\sum_{\ell=1}^{m}\frac{\partial x_{i}^{\mu}}{\partial x_{\ell}^{\nu}}\frac{\partial x_{\ell}^{\nu}}{\partial x_{j}^{\lambda}}
=\sum_{\ell=1}^{m} A(\lambda,\nu)_{\ell}^{j}A(\nu,\mu)^{\ell}_{i}\]
on $U_{\lambda}\cap U_{\nu}\cap U_{\mu}$, we have $A(\lambda,\mu)=A(\lambda,\nu)A(\nu,\mu)$. 
Furthermore, since 
\[
\begin{aligned}
b(\lambda,\mu)^{j}=&\frac{\partial (K_{\lambda}-K_{\nu}+K_{\nu}-K_{\mu})}{\partial x_{j}^{\lambda}}\\
=&b(\lambda,\nu)^{j}+\sum_{\ell=1}^{m}\frac{\partial (K_{\nu}-K_{\mu})}{\partial x_{\ell}^{\nu}}\frac{\partial x_{\ell}^{\nu}}{\partial x_{j}^{\lambda}}
=b(\lambda,\nu)^{j}+\sum_{\ell=1}^{m} A(\lambda,\nu)_{\ell}^{j}b(\nu,\mu)^{\ell}, 
\end{aligned}
\]
we have $b(\lambda,\mu)=b(\lambda,\nu)+A(\lambda,\nu)b(\nu,\mu)$. 
These imply that 
\begin{equation}\label{cocycle}
(b(\lambda,\mu),A(\lambda,\mu))=(b(\lambda,\nu),A(\lambda,\nu))\cdot (b(\nu,\mu),A(\nu,\mu))
\end{equation}
in $\mathbb{R}^{m}\rtimes GL(\mathbb{Z}^{m})$. 
By this observation, we obtain an $F(k,m)$-bundle over $B$. 
More precisely, let $U_{\lambda}\times F(k,m)$ be the trivial $F(k,m)$-bundle over $U_{\lambda}$. 
Define transition functions $\Theta(\lambda,\mu):U_{\lambda}\cap U_{\mu}\to \mathop{\mathrm{Bij}}(F(k,m))$ by 
\[\Theta(\lambda,\mu)(x):=(b(\lambda,\mu),A(\lambda,\mu)), \]
where $\mathop{\mathrm{Bij}}(F(k,m))$ is the set of all bijections from $F(k,m)$ to itself and 
we identify elements in $\mathbb{R}^{m}\rtimes GL(\mathbb{Z}^{m})$ with elements in $\mathop{\mathrm{Bij}}(F(k,m))$ 
by the action \eqref{action}. 
The relation \eqref{cocycle} says that $\Theta(\lambda,\mu)$ satisfy the cocycle condition, 
and hence we get a $F(k,m)$-bundle over $B$ by gluing each trivialization with the transition functions. 
We denote this $F(k,m)$-bundle over $B$ by $\mathcal{F}(k,m)$ and the projection by $\pi:\mathcal{F}(k,m)\to B$. 
We also denote the local trivialization by $\Psi_{\lambda}:\pi^{-1}(U_{\lambda})\to U_{\lambda}\times F(k,m)$ and 
the set of all sections of $\mathcal{F}(k,m)$ over $B$ by $\Gamma(B,\mathcal{F}(k,m))$. 

\begin{definition}
We say that a map $V_{\lambda}:U_{\lambda}\to F(k,m)$ is {\it constant} if there exist 
$\xi_{1},\dots,\xi_{k}$, $\zeta_{k+1},\dots,\zeta_{m}$ in $\mathbb{Z}^{m}$ and $a\in \mathbb{R}^{m}$ which satisfy 
\begin{equation}\label{constcondi}
\det(\xi_{1},\dots,\xi_{k}, \zeta_{k+1},\dots,\zeta_{m})=\pm 1\quad\text{and}\quad 
V_{\lambda}(x)=\mathbb{R}\xi_{1}+\dots+\mathbb{R}\xi_{k}+a
\end{equation}
for all $x\in U_{\lambda}$, that is, $a$ does not depend on $x\in U_{\lambda}$. 
For a global section $V\in \Gamma(B,\mathcal{F}(k,m))$, define a local section $V_{\lambda}:U_{\lambda}\to F(k,m)$ 
by 
\begin{equation}\label{defofVlam}
(x,V_{\lambda}(x))=\Psi_{\lambda}\circ V(x). 
\end{equation}
Then, a section $V\in \Gamma(B,\mathcal{F}(k,m))$ is {\it constant} 
if $V_{\lambda}$ is constant for all $\lambda\in\Lambda$. 
\end{definition}

\begin{remark}
When $k=m$, then $F(m,m)=\{\,\mathbb{R}^{m}\,\}$, a one-point set. 
Hence, $\mathcal{F}(m,m)$ is a trivial one-point set bundle over $M$ and 
the constant section trivially exists. 
However, for $k<m$, the existence of constant sections of $\mathcal{F}(k,m)\to B$ is not assured. 
\end{remark}
\section{Construction Data}\label{cdata}
Let $(B,\mathcal{D}, K)$ be an $m$-dimensional tropical manifold. 
Fix an integer $k$ with $0\leq k\leq m$ and let $\mathcal{F}(k,m)$ 
be the $F(k,m)$-bundle over $B$ defined in Section \ref{bundle}. 
Assume that there exist at least one constant section of $\mathcal{F}(k,m)$ over $B$. 
Fix a constant section $V\in \Gamma(B,\mathcal{F}(k,m))$. 
From $V$, we construct the associated submanifold $B(V)$ in $B$ with dimension $k$ as follows.
Define a collection of maps $\{\,\mu_{\lambda}:U_{\lambda}\to \mathbb{R}^{m}\,\}_{\lambda \in \Lambda}$ by
\begin{equation}\label{moment}
\mu_{\lambda}(x):=\tilde{x}_{1}(x)e_{1}+\cdots+\tilde{x}_{m}(x)e_{m},
\end{equation}
where $(\tilde{x}_{i})_{i=1}^{m}$ is the Legendre transform of the tropical coordinates on $U_{\lambda}$. 
This is an analogue of a (locally defined) moment map of a toric K\"ahler manifold which is restricted to its real form, see \eqref{localmoment}. 
In this case, the real form is $B$. 
We observe the relation $\mu_{\lambda}$ and $\mu_{\nu}$ on the intersection $U_{\lambda}\cap U_{\nu}$. 
By the formula \eqref{change} and definitions \eqref{Ab}, we have
\[
\tilde{x}_{i}^{\lambda}=\sum_{j=1}^{m}A(\lambda,\nu)_{j}^{i}\tilde{x}_{j}^{\nu}+b(\lambda,\nu)^{i}. 
\]
This implies that 
\begin{equation}\label{momo}
\mu_{\lambda}(x)=A(\lambda,\nu)\mu_{\nu}+b(\lambda,\nu). 
\end{equation}
Define $U_{\lambda}(V):=\{\,x\in U_{\lambda} \mid \mu_{\lambda}(x)\in V_{\lambda}(x) \,\}$. 

\begin{proposition}\label{localsub}
$U_{\lambda}(V)$ is a $k$-dimensional submanifold in $U_{\lambda}$. 
\end{proposition}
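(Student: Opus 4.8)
The plan is to realize $U_{\lambda}(V)$ as the regular zero set of $m-k$ smooth functions, and to read off the rank of its differential from the Hessian of $K_{\lambda}$, whose positive-definiteness is exactly the convexity hypothesis on $K$.

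First I would rewrite the defining condition in a form amenable to the regular value theorem. Since $V$ is constant, the fibre $V_{\lambda}(x)$ does not depend on $x$: by \eqref{constcondi} it is the fixed affine subspace $V_{\lambda}=\overline{V}_{\lambda}+a$ with $\overline{V}_{\lambda}=\mathbb{R}\xi_{1}+\cdots+\mathbb{R}\xi_{k}$. Because $\det(\xi_{1},\dots,\xi_{k},\zeta_{k+1},\dots,\zeta_{m})=\pm 1$, these $m$ vectors form a basis of $\mathbb{R}^{m}$; let $\zeta^{k+1},\dots,\zeta^{m}\in(\mathbb{R}^{m})^{*}$ be the members of the dual basis paired with $\zeta_{k+1},\dots,\zeta_{m}$. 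Then $\overline{V}_{\lambda}$ is precisely the common kernel of $\zeta^{k+1},\dots,\zeta^{m}$, so setting $G_{\alpha}(x):=\zeta^{\alpha}(\mu_{\lambda}(x)-a)$ for $\alpha=k+1,\dots,m$ produces $m-k$ smooth functions on $U_{\lambda}$ with $U_{\lambda}(V)=\{\,x\in U_{\lambda}\mid G_{k+1}(x)=\cdots=G_{m}(x)=0\,\}=G^{-1}(0)$, where $G:=(G_{k+1},\dots,G_{m})\colon U_{\lambda}\to\mathbb{R}^{m-k}$.

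Next I would compute the differential of $G$. From \eqref{moment} and $\tilde{x}_{i}=\partial K_{\lambda}/\partial x_{i}$ we get $\partial\mu_{\lambda}/\partial x_{j}=\sum_{i}(\partial^{2}K_{\lambda}/\partial x_{i}\partial x_{j})e_{i}$, i.e. the Jacobian of $\mu_{\lambda}$ is the Hessian matrix of $K_{\lambda}$, and hence $(dG_{\alpha})_{x}=\zeta^{\alpha}\circ(d\mu_{\lambda})_{x}$. The convexity of $K$ means this Hessian is positive definite, hence invertible, so $(d\mu_{\lambda})_{x}$ is a linear isomorphism at every $x\in U_{\lambda}$; in particular $\mu_{\lambda}$ is a local diffeomorphism. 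Since $\zeta^{k+1},\dots,\zeta^{m}$ are linearly independent covectors and $(d\mu_{\lambda})_{x}$ is an isomorphism, the covectors $(dG_{k+1})_{x},\dots,(dG_{m})_{x}$ stay linearly independent, so $(dG)_{x}$ has maximal rank $m-k$ for every $x\in U_{\lambda}(V)$. Thus $0$ is a regular value of $G$, and the regular value theorem yields that $U_{\lambda}(V)=G^{-1}(0)$ is a submanifold of $U_{\lambda}$ of dimension $m-(m-k)=k$.

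The only substantive input is the passage from convexity to invertibility of the Hessian, which is what makes $\mu_{\lambda}$ meet the affine subspace $V_{\lambda}$ transversally; everything else is the linear algebra of choosing functionals that cut out $\overline{V}_{\lambda}$ together with a direct application of the regular value theorem. I do not expect a genuine obstacle here. I would only remark that the argument is purely local and does not require $\mu_{\lambda}$ to be globally injective: strict convexity of $K_{\lambda}$ would in fact give injectivity on convex charts, but this stronger fact is not needed for the submanifold conclusion.
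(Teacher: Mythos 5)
Your proposal is correct and follows essentially the same route as the paper: both cut out $U_{\lambda}(V)$ as the common zero set of the $m-k$ functions $x\mapsto\langle\mu_{\lambda}(x)-a,\zeta^{j}\rangle$ and verify maximal rank of the differential before invoking the implicit function (regular value) theorem. The only cosmetic difference is that the paper packages the linear independence of the $df_{j}$ via the identity $df_{j}=-\omega(\zeta^{j\sharp},\cdot)$ and the nondegeneracy of $\omega$ (to highlight the moment-map analogy), whereas you use the positive-definiteness of the Hessian of $K_{\lambda}$ directly; these are the same underlying fact.
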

\begin{proof}
Since $V$ is a constant section, 
for each $\lambda$ there exist 
$\xi_{1},\dots,\xi_{k}$, $\zeta_{k+1},\dots,\zeta_{m}$ in $\mathbb{Z}^{m}$ and $a\in \mathbb{R}^{m}$ which satisfy \eqref{constcondi}. 
By using the standard basis $e_{1},\dots,e_{m}$, we get an isomorphism 
$\mathbb{R}^{m}\cong (\mathbb{R}^{m})^{*}$, and $\xi^{1},\dots,\xi^{k},\zeta^{k+1},\dots,\zeta^{m}$ denote the dual basis of 
the basis $\xi_{1},\dots,\xi_{k}$, $\zeta_{k+1},\dots,\zeta_{m}$. 
Then, we have $V_{\lambda}(x)=\{\,\tilde{x}\in\mathbb{R}^{m}\mid \langle \tilde{x},\zeta^{j}\rangle = \langle a,\zeta^{j}\rangle\, ,\, j=k+1,\dots,m \,\}$ and 
also have
\[U_{\lambda}(V)=\{\,x\in U_{\lambda}\mid \langle \mu_{\lambda}(x),\zeta^{j}\rangle = \langle a,\zeta^{j}\rangle\, ,\, j=k+1,\dots,m \,\}.\]
For $j=k+1,\dots,m$, define functions $f_{j}$  on $U_{\lambda}$ by 
$f_{j}(x):=\langle \mu_{\lambda}(x),\zeta^{j}\rangle - \langle a,\zeta^{j}\rangle$. 
Then, $U_{\lambda}(V)$ is the intersection of zero sets of $f_{j}$. 
Note that 
\begin{equation}\label{localmoment}
\begin{aligned}
df_{j}=&d\langle \mu_{\lambda},\zeta^{j}\rangle
=\langle e_{1},\zeta^{j}\rangle d\tilde{x}_{1}+\cdots+\langle e_{n},\zeta^{j}\rangle d\tilde{x}_{m}\\
=&\langle e_{1},\zeta^{j}\rangle \sum_{\ell=1}^{m}\frac{\partial^2 K_{\lambda}}{\partial x_{1}\partial x_{\ell}}dx_{\ell}+\cdots+\langle e_{n},\zeta^{j}\rangle 
\sum_{\ell=1}^{m}\frac{\partial^2 K_{\lambda}}{\partial x_{m}\partial x_{\ell}}dx_{\ell}
=-\omega\left(\zeta^{j\sharp}, \,\cdot\, \right),
\end{aligned}
\end{equation}
where $\omega$ is the symplectic form defined by \eqref{symp} and $\zeta^{j\sharp}$ is defined by 
\[
\zeta^{j\sharp}:=\langle e_{1},\zeta^{j}\rangle \frac{\partial}{\partial y_{1}}+\cdots+\langle e_{m},\zeta^{j}\rangle \frac{\partial}{\partial y_{m}}. 
\]
Since $\omega$ is non-degenerate and $\zeta^{j\sharp}$ ($j=k+1,\dots,m$) are linearly independent, 
we see that $df_{j}$ ($j=k+1,\dots,m$) are linearly independent. 
Thus, by the implicit function theorem, $U_{\lambda}(V)$ is a $k$-dimensional submanifold in $U_{\lambda}$. 
\end{proof}

Next, we will see that each $U_{\lambda}(V)$ coincides in the intersection of $U_{\lambda}$ and $U_{\mu}$. 

\begin{proposition}\label{gluesub}
$U_{\lambda}(V)=U_{\nu}(V)$ in $U_{\nu}\cap U_{\lambda}$. 
\end{proposition}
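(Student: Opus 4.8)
The plan is to show that the two defining conditions $\mu_\lambda(x)\in V_\lambda(x)$ and $\mu_\nu(x)\in V_\nu(x)$ are equivalent on the overlap $U_\lambda\cap U_\nu$, by verifying that the moment-like maps $\mu_\lambda,\mu_\nu$ and the local sections $V_\lambda,V_\nu$ are related by one and the same element of $\mathbb{R}^m\rtimes GL(\mathbb{Z}^m)$, and then invoking that this element acts bijectively on $\mathbb{R}^m$ (and hence on $F(k,m)$). In other words, the whole point is that the bundle $\mathcal{F}(k,m)$ was built so that a constant section transforms by exactly the same cocycle \eqref{cocycle} as the family $\{\mu_\lambda\}$.

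First I would recall the transformation of the moment maps. By \eqref{momo}, on $U_\lambda\cap U_\nu$ we have
\[
\mu_\lambda(x)=A(\lambda,\nu)\mu_\nu(x)+b(\lambda,\nu)=(b(\lambda,\nu),A(\lambda,\nu))\cdot\mu_\nu(x),
\]
where the point $\mu_\nu(x)\in\mathbb{R}^m$ is acted on by the affine transformation \eqref{action}.

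Second, I would extract the corresponding transformation of the local sections. From \eqref{defofVlam} we have $(x,V_\lambda(x))=\Psi_\lambda\circ V(x)$ and $(x,V_\nu(x))=\Psi_\nu\circ V(x)$; applying $\Psi_\lambda\circ\Psi_\nu^{-1}$ and using that the transition function is $\Theta(\lambda,\nu)(x)=(b(\lambda,\nu),A(\lambda,\nu))$ acting through \eqref{action}, I obtain
\[
V_\lambda(x)=(b(\lambda,\nu),A(\lambda,\nu))\cdot V_\nu(x)=A(\lambda,\nu)V_\nu(x)+b(\lambda,\nu).
\]
Thus $\mu_\lambda$ and $V_\lambda$ are the images of $\mu_\nu$ and $V_\nu$ under the \emph{same} affine map $g:=(b(\lambda,\nu),A(\lambda,\nu))$. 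Since $g\in\mathbb{R}^m\rtimes GL(\mathbb{Z}^m)$ acts bijectively on $\mathbb{R}^m$, for any $x\in U_\lambda\cap U_\nu$ we conclude
\[
\mu_\nu(x)\in V_\nu(x)\iff g\cdot\mu_\nu(x)\in g\cdot V_\nu(x)\iff \mu_\lambda(x)\in V_\lambda(x),
\]
which says precisely that $x\in U_\nu(V)$ iff $x\in U_\lambda(V)$. Hence the two subsets coincide on the overlap.

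The step I expect to require the most care is the second one: making the transition-function convention fully explicit so that $V_\lambda$ is transformed by precisely the element $(b(\lambda,\nu),A(\lambda,\nu))$ appearing in \eqref{momo}, rather than by its inverse or by $(b(\nu,\lambda),A(\nu,\lambda))$. Once the orientation of the cocycle \eqref{cocycle} is matched between the bundle $\mathcal{F}(k,m)$ and the family $\{\mu_\lambda\}_{\lambda\in\Lambda}$, the remainder is the purely formal bijectivity argument above.
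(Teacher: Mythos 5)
Your proposal is correct and follows essentially the same route as the paper: both rest on the observation that \eqref{momo} and the transition functions $\Theta(\lambda,\nu)$ transform $\mu_\nu$ and $V_\nu$ by the same element $(b(\lambda,\nu),A(\lambda,\nu))$ of $\mathbb{R}^m\rtimes GL(\mathbb{Z}^m)$. The only cosmetic difference is that you conclude both inclusions at once from the bijectivity of this affine map, whereas the paper proves one inclusion and then interchanges the roles of $\lambda$ and $\nu$.
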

\begin{proof}
Assume that there exists a point $x\in U_{\lambda}(V)\cap U_{\mu}$. 
By \eqref{momo}, we have 
\[\mu_{\lambda}(x)=A(\lambda,\nu)\mu_{\nu}(x)+b(\lambda,\nu)\in V_{\lambda}(x).\]
Recall that $V_{\lambda}(x)$ is defined by \eqref{defofVlam}. 
This together with the definition of the bundle $\mathcal{F}(k,m)$ implies that 
\[
V_{\lambda}(x)=A(\lambda,\nu)V_{\nu}(x)+b(\lambda,\nu). 
\]
Thus, we proved that $\mu_{\nu}(x)\in V_{\nu}(x)$ and this yields $U_{\lambda}(V)\cap U_{\nu}\subset U_{\nu}(V)\cap U_{\lambda}$. 
Interchanging $\lambda$ and $\mu$, we get the counter inclusion, and the proof is complete. 
\end{proof}

By Proposition \ref{localsub} and \ref{gluesub}, the following definition makes sense. 
\begin{definition}
For $V\in \Gamma(B,\mathcal{F}(k,m))$, a submanifold $B(V)$ in $B$ is defined by
\[B(V):=\bigcup_{\lambda\in\Lambda}U_{\lambda}(V). \]
\end{definition}
Note that $\mathrm{dim}_{\mathbb{R}}B(V)=k$. Roughly speaking, $B(V)$ is a set what one wants to write as $\{\,x\in B \mid \mu(x)\in V(x) \,\}$. 
However, in this setting, we do not have globally defined $\mu$. 
Hence, we defined $U_{\lambda}(V)$ locally and took the union of these. 

We denote the inclusion map by $\iota:B(V) \hookrightarrow B$. 
Then, we get the pull-back bundle $\iota^{*}(TB)$ over $B(V)$. 
We denote the set of all smooth sections of $\iota^{*}(TB)$ over $B(V)$ by $\Gamma(B(V),\iota^{*}(TB))$. 
In summary, we need the following data. 

\begin{definition}
Let $k$ be an integer with $0\leq k\leq m$, 
$V\in \Gamma(B,\mathcal{F}(k,m))$ be a constant section and 
$Y\in \Gamma(B(V),\iota^{*}(TB))$ be a smooth section. 
We call the triplet $(k,V,Y)$ {\it construction data}. 
\end{definition}

Let $(k,V,Y)$ be construction data. 
Here, we prepare some notations to write propositions and proofs simply. 
Assume that $p\in B(V)$ is given. 
Take a tropical chart $(U_{\lambda}, x_{1},\dots,x_{m})$ around $p$. 
Then, for this $\lambda$, there exist 
$\xi_{1},\dots,\xi_{k}$, $\zeta_{k+1},\dots,\zeta_{m}$ in $\mathbb{Z}^{m}$ and $a\in \mathbb{R}^{m}$ which satisfy \eqref{constcondi}. 
Thus, by the definition of $B(V)$, the assignment 
\begin{equation}\label{natpara}
u=(u^{1},\dots,u^{k})\mapsto \mu_{\lambda}^{-1}(u^{1}\xi_{1}+\dots+u^{k}\xi_{k}+a)\in U_{\lambda}(V)
\end{equation}
gives a local parametrization around $p$ when the domain of $u$ is appropriately chosen. 
We call this a {\it natural parametrization} of $B(V)$. 
Define locally defined $\mathbb{C}$-valued functions by 
\[c^{i}_{j}(u):=\frac{\partial x^{i}}{\partial u^{j}}+\sqrt{-1}\frac{\partial Y^{i}}{\partial u^{j}}, \]
where $Y^{i}(u)$ is the coefficient of $\partial/\partial x^{i}$ of $Y(u)$. 
Put 
\begin{equation}\label{WZ}
\mathcal{W}(u):=
\left[
\begin{array}{ccc}
c^{1}_{1}(u)     &   \cdots   &  c^{1}_{k}(u) \\
c^{2}_{1}(u)     &   \cdots   &  c^{2}_{k}(u)  \\
\vdots              &   \ddots   &  \vdots       \\
c^{m}_{1}(u)    &   \cdots    &  c^{m}_{k}(u)
\end{array}
\right]
,
\mathcal{Z}:=
\left[
\begin{array}{ccc}
  \langle e_{1},\zeta^{k+1}\rangle  &  \cdots   &  \langle e_{1},\zeta^{m}\rangle\\
  \langle e_{2},\zeta^{k+1}\rangle  &  \cdots   &  \langle e_{2},\zeta^{m}\rangle\\
  \vdots         &  \ddots   &  \vdots      \\
 \langle e_{m},\zeta^{k+1}\rangle  &  \cdots   &  \langle e_{m},\zeta^{m}\rangle
\end{array}
\right],
\end{equation}
and denote the augmentation of $\mathcal{W}(u)$ and $\mathcal{Z}$, an $m\times m$ matrix, by $[\mathcal{W}(u)|\mathcal{Z}]$. 
In Proposition \ref{sLagpt} and \ref{dHYMprop}, we will see the same equation; 
\begin{equation}\label{MA}
\arg\det\left[\mathcal{W}(u)|\mathcal{Z} \right]=\theta_{0}. 
\end{equation}
When $Y=\nabla f$ for some function $f$ on $B(V)$, the equation \eqref{MA} is 
considered as a certain kind of  Monge--Amp\`ere equation. 
\section{Construction of a Lagrangian submanifold in $X(B)$}\label{LAG}
Let $(B,\mathcal{D}, K)$ be an $m$-dimensional tropical  manifold. 
Fix construction data $(k,V,Y)$. 
Let $(x_{i}^{\lambda})_{i=1}^{m}$ be tropical coordinates on $U_{\lambda}$. 
For each $x\in U_{\lambda}$, an isomorphism $\Theta_{\lambda}:(\mathbb{R}^{m})^{*}\to T_{x}B$ is defined by 
\[\Theta_{\lambda}\left(\sum_{i=1}^{m}y_{i}e^{i}\right):=\sum_{i=1}^{m}y_{i}\frac{\partial}{\partial x_{i}^{\lambda}},\]
where $(e^{i})_{i=1}^{m}$ is the standard dual basis of $(\mathbb{R}^{m})^{*}$. 
Let $V_{\lambda}(x)$ be the local expression of $V(x)$ defined by \eqref{defofVlam}. 
Define the conormal space of the linear part of $V_{\lambda}(x)$ by
\[\overline{V}_{\lambda}^{\bot}(x):=\{\,w\in (\mathbb{R}^{m})^{*}\mid \langle v,w\rangle=0\,\,\text{for all}\,\,v\in \overline{V}_{\lambda}(x) \,\}.\]
Since $V$ is a constant section, for each $\lambda$ 
there exist 
$\xi_{1},\dots,\xi_{k}$, $\zeta_{k+1},\dots,\zeta_{m}$ in $\mathbb{Z}^{m}$ and $a\in \mathbb{R}^{m}$ which satisfy \eqref{constcondi}. 
Denote the dual basis of $\xi_{i}$, $\zeta_{j}$ by $\xi^{i}$, $\zeta^{j}$. 
Then, it is clear that these dual basis is also unimodular and 
$\overline{V}_{\lambda}^{\bot}(x)$ is spanned by $\{\, \zeta^{j}\,\}_{j=k+1}^{m}$. 
Hence, $\overline{V}_{\lambda}^{\bot}(x)/(\mathbb{Z}^{m})^{*}$ is isomorphic to an $(m-k)$-dimensional torus $T^{m-k}$. 
By using the map $\Theta_{\lambda}:(\mathbb{R}^{m})^{*}\to T_{x}B$, 
an $(m-k)$-dimensional subtorus $T(V^{\bot})_{x}$ in the torus fiber $f^{-1}(x)=T_{x}B/\Lambda_{x}$ is defined by 
\begin{equation}\label{TVnorm}
T(V^{\bot})_{x}:=\Theta_{\lambda}(\overline{V}_{\lambda}^{\bot}(x))/\Lambda_{x}. 
\end{equation}

\begin{lemma}\label{normal}
$\Theta_{\lambda}(\overline{V}_{\lambda}^{\bot}(x))=T_{x}^{\bot}B(V)$, where $\bot$ in the right hand side means the orthogonal complement of $T_{x}B(V)$ in $T_{x}B$ 
with respect to the Riemannian metric $g$ on $B$. 
\end{lemma}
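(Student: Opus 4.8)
The plan is to exhibit both sides as $g$-orthogonal complements of one and the same $(m-k)$-dimensional subspace of $T_{x}B$, after which the statement follows from the nondegeneracy of $g$. The whole argument is local on $U_{\lambda}$, so I fix the data $\xi_{1},\dots,\xi_{k},\zeta_{k+1},\dots,\zeta_{m},a$ satisfying \eqref{constcondi} for this $\lambda$, together with the dual basis $\zeta^{k+1},\dots,\zeta^{m}$.

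First I would recall from the proof of Proposition \ref{localsub} that on $U_{\lambda}$ the set $B(V)=U_{\lambda}(V)$ is cut out as the common zero locus of the functions $f_{j}(x)=\langle \mu_{\lambda}(x),\zeta^{j}\rangle-\langle a,\zeta^{j}\rangle$ for $j=k+1,\dots,m$, whose differentials were shown there to be linearly independent along $B(V)$. Hence $T_{x}B(V)=\bigcap_{j=k+1}^{m}\ker(df_{j})_{x}$.

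The key step is to rewrite each $df_{j}$ through the metric $g$. Starting from the first line of \eqref{localmoment} we have $df_{j}=\sum_{i}\langle e_{i},\zeta^{j}\rangle\,d\tilde{x}_{i}$, and substituting $d\tilde{x}_{i}=\sum_{\ell}\frac{\partial^{2}K_{\lambda}}{\partial x_{i}\partial x_{\ell}}dx_{\ell}$ gives $df_{j}(\partial/\partial x_{p})=\sum_{i}\langle e_{i},\zeta^{j}\rangle \frac{\partial^{2}K_{\lambda}}{\partial x_{i}\partial x_{p}}$. Because $g$ is Hessian, the right-hand side is precisely $g(\Theta_{\lambda}(\zeta^{j}),\partial/\partial x_{p})$, so that $df_{j}=g(\Theta_{\lambda}(\zeta^{j}),\,\cdot\,)$; in other words $\Theta_{\lambda}(\zeta^{j})$ is the $g$-gradient of $f_{j}$. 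Using that $\overline{V}_{\lambda}^{\bot}(x)$ is spanned by $\zeta^{k+1},\dots,\zeta^{m}$, this identifies $\Theta_{\lambda}(\overline{V}_{\lambda}^{\bot}(x))$ with the span of the gradients $\Theta_{\lambda}(\zeta^{j})$, and therefore $T_{x}B(V)=\bigcap_{j=k+1}^{m}\{\,v:g(\Theta_{\lambda}(\zeta^{j}),v)=0\,\}$ is exactly the $g$-orthogonal complement of $\Theta_{\lambda}(\overline{V}_{\lambda}^{\bot}(x))$.

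Finally I would take $g$-orthogonal complements of both sides. Since $g$ is positive definite, $(W^{\bot})^{\bot}=W$ for every subspace $W\subset T_{x}B$, and $\dim\Theta_{\lambda}(\overline{V}_{\lambda}^{\bot}(x))=m-k$ matches the codimension of $B(V)$; hence the previous identity yields $T_{x}^{\bot}B(V)=\Theta_{\lambda}(\overline{V}_{\lambda}^{\bot}(x))$, as claimed. The only genuinely substantive point is the middle step, namely recognizing that the Hessian metric turns $df_{j}$ into the $g$-dual of $\Theta_{\lambda}(\zeta^{j})$; everything else is linear algebra with orthogonal complements together with a dimension count.
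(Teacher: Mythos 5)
Your proposal is correct and follows essentially the same route as the paper: both arguments reduce to the observation that the differentials $df_{j}$ cutting out $B(V)$ are the $g$-duals of the vectors $\Theta_{\lambda}(\zeta^{j})$ spanning $\Theta_{\lambda}(\overline{V}_{\lambda}^{\bot}(x))$ (the paper phrases this via $df_{j}=-\omega(\zeta^{j\sharp},\cdot)=g(-J\zeta^{j\sharp},\cdot)$ with $-J\zeta^{j\sharp}=\Theta_{\lambda}(\zeta^{j})$, while you compute directly with the Hessian of $K_{\lambda}$ — the same calculation as \eqref{localmoment}). The concluding linear algebra (linear independence, dimension count, double orthogonal complement) matches the paper's implicit argument, just spelled out more explicitly.
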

\begin{proof}
We use notations and formulas appeared in the proof of Proposition \ref{localsub}. 
It follows that $df_{i}(v)=0$ for any $v\in T_{x}B(V)$ from $f_{j}=0$ on $B(V)$. 
This yields 
\begin{equation}\label{zero}
0=-\omega\left(\zeta^{j\sharp}, v \right)=g\left(-J\zeta^{j\sharp}, v \right). 
\end{equation}
Note that
\[-J\zeta^{j\sharp}=\langle e_{1},\zeta^{j}\rangle \frac{\partial}{\partial x_{1}}+\cdots+\langle e_{m},\zeta^{j}\rangle \frac{\partial}{\partial x_{m}}=\Theta_{\lambda}(\zeta^{j}).\]
This implies that $\Theta_{\lambda}(\zeta^{j})$ ($j=k+1,\dots,m$) is a basis of $T_{x}^{\bot}B(V)$. 
On the other hand, $\Theta_{\lambda}(\zeta^{j})$ ($j=k+1,\dots,m$) is nothing but a basis of $\Theta_{\lambda}(\overline{V}_{\lambda}^{\bot}(x))$. 
Then, the proof is complete. 
\end{proof}

From Lemma \ref{normal}, it follows that the definition of $T(V^{\bot})_{x}$ does not depend on the choice of the tropical chart. 
Here, recall that a smooth section $Y\in \Gamma(B(V),\iota^{*}(TB))$ is chosen as a part of the construction data. 
Hence, for each $x\in B(V)$, we can translate the subtorus $T(V^{\bot})_{x}$ in $f^{-1}(x)=T_{x}B/\Lambda_{x}$ by adding $Y(x)$, 
and we denote it by
\[T(V^{\bot},Y)_{x}:=T(V^{\bot})_{x}+Y(x).\]
Note that, in general, this torus is not a subgroup of $f^{-1}(x)$ since it might not pass through the origin. 
By summing up all $T(V^{\bot},Y)_{x}$ over $B(V)$, we get a $T^{m-k}$-bundle over $B(V)$, and we denote its total space by
\[L(V,Y):=\bigcup_{x\in B(V)}T(V^{\bot},Y)_{x}.\]
The dimension of the fiber is $m-k$ and the one of the base is $k$. 
Hence, $L(V,Y)$ is an $m$-dimensional submanifold in $X(B)$, see Figure \ref{fig1}. 

\begin{figure}[h]
\begin{center}
\includegraphics[bb=160 541 435 672, clip]{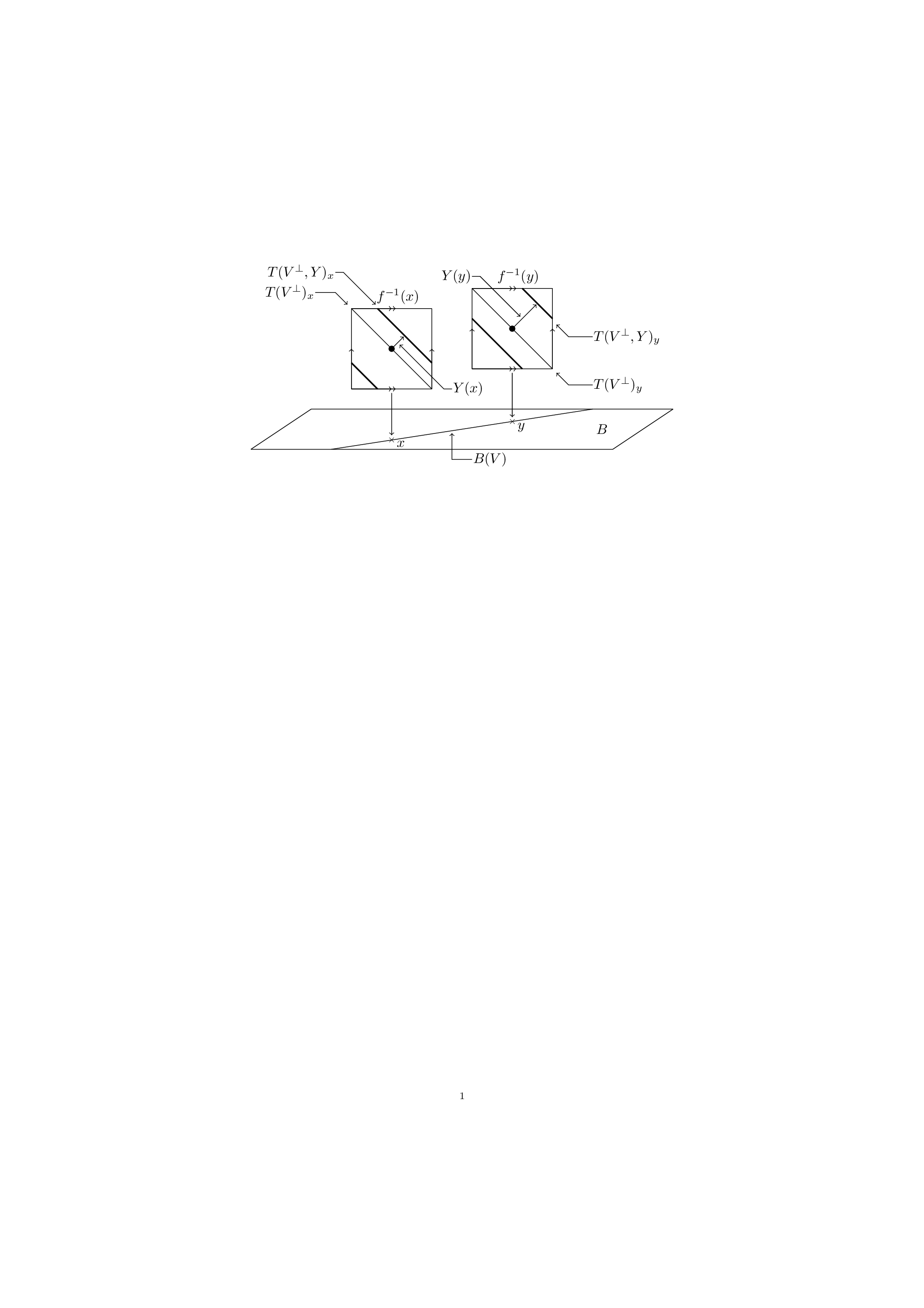}
\end{center}
\caption{$L(V,Y)$}
\label{fig1}
\end{figure}

\begin{proposition}\label{normalprop}
$T(V^{\bot},Y)_{x}=T(V^{\bot})_{x}$ if and only if $Y(x)\in T_{x}^{\bot}B(V)$. 
\end{proposition}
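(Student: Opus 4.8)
The plan is to recognize that $T(V^{\bot})_{x}$ is a \emph{subgroup} of the torus fiber $f^{-1}(x)=T_{x}B/\Lambda_{x}$, not merely a submanifold. By \eqref{TVnorm} it is the image of the linear subspace $\Theta_{\lambda}(\overline{V}_{\lambda}^{\bot}(x))\subset T_{x}B$ under the quotient homomorphism $q\colon T_{x}B\to T_{x}B/\Lambda_{x}$ (it is a closed subtorus because the integral dual vectors $\zeta^{j}$ send each $\Theta_{\lambda}(\zeta^{j})$ into $\Lambda_{x}$), so it contains the identity coset and is closed under the group law. Once this is in place, the statement becomes an instance of the elementary fact that, for a subgroup $H$ of an abelian group $G$ and $g\in G$, one has $H+g=H$ if and only if $g\in H$; here $G=f^{-1}(x)$, $H=T(V^{\bot})_{x}$ and $g=q(Y(x))$, so that $T(V^{\bot},Y)_{x}=H+g$ by definition.

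For the \emph{if} direction I would start from $Y(x)\in T_{x}^{\bot}B(V)$ and invoke Lemma \ref{normal}, which identifies $T_{x}^{\bot}B(V)$ with $\Theta_{\lambda}(\overline{V}_{\lambda}^{\bot}(x))$; then $q(Y(x))$ already lies in $T(V^{\bot})_{x}$, and adding an element of a subgroup to that subgroup returns the same subgroup, giving $T(V^{\bot},Y)_{x}=T(V^{\bot})_{x}$. For the \emph{only if} direction, equality of the two tori forces the identity element of $T(V^{\bot})_{x}$ to lie in the translate $T(V^{\bot})_{x}+q(Y(x))$, whence $q(Y(x))\in T(V^{\bot})_{x}$; unwinding the quotient and applying Lemma \ref{normal} once more returns $Y(x)\in T_{x}^{\bot}B(V)$.

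The step that needs the most care is the passage between the tangent vector $Y(x)\in T_{x}B$ and its class $q(Y(x))$ in the quotient torus. The coset argument pins down $Y(x)$ only modulo the lattice, so a priori it yields $Y(x)\in \Theta_{\lambda}(\overline{V}_{\lambda}^{\bot}(x))+\Lambda_{x}=T_{x}^{\bot}B(V)+\Lambda_{x}$ rather than $Y(x)\in T_{x}^{\bot}B(V)$. Since $T(V^{\bot},Y)_{x}$ depends on $Y(x)$ only through $q(Y(x))$, this lattice ambiguity is precisely the freedom already built into the construction, and I would make the reading of the membership condition precise by working within the fixed tropical chart and interpreting it up to $\Lambda_{x}$. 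All of the geometric content—that the relevant linear subspace is exactly the $g$-orthogonal complement $T_{x}^{\bot}B(V)$—is supplied verbatim by Lemma \ref{normal}, so no new computation with the metric $g$ is required; what remains is only the coset bookkeeping described above.
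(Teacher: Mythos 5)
Your proof is correct and takes essentially the same route as the paper, which simply declares the result immediate from the definition \eqref{TVnorm} and Lemma \ref{normal} --- i.e.\ exactly the subgroup-translate argument ($H+g=H$ iff $g\in H$) that you spell out. Your caveat about the lattice is a fair point the paper glosses over: strictly the ``only if'' direction yields only $Y(x)\in T_{x}^{\bot}B(V)+\Lambda_{x}$, but since $T(V^{\bot},Y)_{x}$ depends on $Y(x)$ only modulo $\Lambda_{x}$ this ambiguity is harmless for the way the proposition is used (to reduce to tangential $Y$).
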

\begin{proof}
This follows immediately from the definition \eqref{TVnorm} and Lemma \ref{normal}. 
\end{proof}

By Proposition \ref{normalprop}, we can assume that 
$Y\in \Gamma(B(V),\iota^{*}(TB))$ satisfies $Y(x)\in T_{x}B(V)$ without loss of generality,
that is, $Y$ is a vector field on $B(V)$. 

\begin{proposition}\label{Lagpt}
$L(V,Y)$ is a Lagrangian submanifold in $(X(B),\omega)$ if and only if $Y$ is locally gradient, that is, 
$Y=\nabla f$ for some locally defined smooth function $f$ on $B(V)$. 
\end{proposition}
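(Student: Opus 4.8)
The plan is to pull back $\omega$ to $L(V,Y)$ in the natural parametrization and to recognize its vanishing as the closedness of a $1$-form that turns out to be the metric dual of $Y$. By Proposition \ref{normalprop} we may assume $Y$ is tangent to $B(V)$. Fix $p\in B(V)$, a tropical chart $(U_\lambda,x_1,\dots,x_m)$ around $p$, and the data $\xi_1,\dots,\xi_k,\zeta_{k+1},\dots,\zeta_m$ as in \eqref{constcondi}, and use the natural parametrization \eqref{natpara} with coordinates $u=(u^1,\dots,u^k)$ on the base $B(V)$ and coordinates $t=(t_{k+1},\dots,t_m)$ on the fibre $T(V^\bot,Y)_x$. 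Rewriting \eqref{symp} in the real coordinates $(x,y)$ and using $\tilde x_j=\partial K_\lambda/\partial x_j$ gives the convenient expression
\[
\omega=\sum_{i,j}\frac{\partial^2 K_\lambda}{\partial x_i\partial x_j}\,dx_i\wedge dy_j=\sum_{j}d\tilde x_j\wedge dy_j .
\]
Along $L(V,Y)$ one has $\tilde x_j=\sum_{a}u^a(\xi_a)_j+\mathrm{const}$, so $\partial\tilde x_j/\partial u^a=(\xi_a)_j$, while the fibre contributes $y_j=Y^j(u)+\sum_{l}t_l\,\langle e_j,\zeta^l\rangle$, where $Y^j$ is the coefficient of $\partial/\partial x_j$ in $Y$.

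First I would compute the pullback $\iota_L^{*}\omega$. Since $\tilde x_j$ does not depend on $t$, there is no $dt\wedge dt$ part, and the mixed $du\wedge dt$ part has coefficient $\sum_j(\xi_a)_j\langle e_j,\zeta^l\rangle=\langle\xi_a,\zeta^l\rangle$, which vanishes because $\zeta^l$ ($l\geq k+1$) annihilates $\xi_a$ ($a\leq k$) by the duality of the unimodular bases. Hence only the $du\wedge du$ part survives:
\[
\iota_L^{*}\omega=\sum_{a,b}P_{ab}\,du^a\wedge du^b,\qquad P_{ab}:=\sum_j(\xi_a)_j\,\frac{\partial Y^j}{\partial u^b}.
\]
Introducing $\beta_a:=\sum_j(\xi_a)_jY^j$ and $\beta:=\sum_a\beta_a\,du^a$ on $B(V)$, and using that $\xi_a$ is constant, gives $P_{ab}=\partial\beta_a/\partial u^b$. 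Therefore $\iota_L^{*}\omega=0$ if and only if $P_{ab}=P_{ba}$ for all $a,b$, i.e.\ if and only if $\beta$ is closed.

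The remaining, and main, step is to identify $\beta$ with the $g$-dual of $Y$, so that closedness becomes the genuine gradient condition rather than merely the existence of a primitive of some $1$-form. Differentiating $\tilde x_i=\partial K_\lambda/\partial x_i$ along the parametrization yields $(\xi_a)_i=\sum_j\frac{\partial^2K_\lambda}{\partial x_i\partial x_j}\,\partial x_j/\partial u^a$, whence $\partial x_j/\partial u^a=\sum_iK^{ji}(\xi_a)_i$ with $K^{ij}$ the inverse Hessian. A short computation then shows that the induced metric satisfies $g_{ab}=g\!\left(\partial/\partial u^a,\partial/\partial u^b\right)=\sum_j(\xi_a)_j\,\partial x_j/\partial u^b$, and writing the tangent field as $Y=\sum_b\eta^b\,\partial/\partial u^b$ gives $\beta_a=\sum_b g_{ab}\eta^b$. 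Thus $\beta$ is exactly the $1$-form $g$-dual to $Y$ on $B(V)$, so $\beta=df$ is equivalent to $Y=\nabla f$. By the Poincar\'e lemma $\beta$ is closed if and only if it is locally exact, and chaining the equivalences gives: $L(V,Y)$ is Lagrangian $\iff$ $\iota_L^{*}\omega=0$ $\iff$ $\beta$ is closed $\iff$ $Y$ is locally gradient. I expect the identity $g_{ab}=\sum_j(\xi_a)_j\,\partial x_j/\partial u^b$ to be the crux, since it is precisely what converts the symplectic vanishing into the Hessian-metric gradient condition appearing in the statement.
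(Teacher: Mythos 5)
Your proof is correct and follows essentially the same route as the paper's: pull back $\omega$ in the natural parametrization $(u,t)$, check that the fibre--fibre and mixed terms vanish (the latter via $\langle\xi_a,\zeta^l\rangle=0$, which is the paper's \eqref{zero}), and identify the surviving $du\wedge du$ block with the exterior derivative of the $g$-dual of $Y$ (your $\beta$ is the paper's $-\eta$). Your use of the Darboux-type expression $\omega=\sum_j d\tilde x_j\wedge dy_j$, which makes $\tilde x_j$ affine in $u$ along $B(V)$, is a pleasant computational shortcut over the paper's direct Hessian computation of $\omega(W_j,W_{j'})$ and $\partial\eta_j/\partial u^{j'}$, but the logical structure is the same.
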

\begin{proof}
Define a 1-form $\eta$ on $B(V)$ by $\eta(\,\cdot\,):=\omega(JY,\,\cdot\,)=-g(Y,\,\cdot\,)$. 
Since $Y$ is a tangent vector field, 
the condition such that $Y$ is locally gradient is equivalent to $d\eta=0$. 
This equivalence will be used later. 
Let $u=(u^{1}, \dots,u^{k})\mapsto (x^{i}(u))_{i=1}^{m}$ be a natural parametrization in $U_{\lambda}(V)$ defined by \eqref{natpara}. 
Let $Y^{i}(u)$ be the coefficient of $\partial/\partial x^{i}$ of $Y(u)$. 
Let $t=(t^{k+1},\dots,t^{m})$ be a $(m-k)$-dimensional parameter. 
Then, a local parametrization of $L(V,Y)$ is defined by 
\begin{equation}\label{paraut}
(u,t)\mapsto \left(x(u),\sum_{i=k+1}^{m}t^{i}\Theta_{\lambda}(\zeta^{i})+Y(u)\right)\in L(V,Y)\subset TU_{\lambda}/(\Lambda|_{U_{\lambda}}). 
\end{equation}
Hence, the tangent vectors associated with this parametrization are given by 
\[
W_{j}:=\frac{\partial}{\partial u^{j}}+\sum_{\ell=1}^{m}\frac{\partial Y^{\ell}(u)}{\partial u^{j}}\frac{\partial}{\partial y^{\ell}}, \quad
Z_{i}:=\sum_{\ell=1}^{m}\langle e_{\ell},\zeta^{i}\rangle \frac{\partial}{\partial y_{\ell}}=\zeta^{i\sharp}. 
\]
First, we have $\omega(Z_{i},Z_{i'})=0$ since $Z_{i}$ is tangent to a torus fiber and each torus fiber is Lagrangian. 
Next, we will show that $\omega(W_{j},Z_{i})=0$. 
Since $\partial/\partial y^{\ell}$ is tangent to a torus fiber and each torus fiber is Lagrangian, we have
\[\omega(W_{j},Z_{i})=\omega\left(\frac{\partial}{\partial u^{j}}, \zeta^{i\sharp}\right). \]
Then, by \eqref{zero}, we have $\omega(W_{j},Z_{i})=0$. 
Note that the above argument does not depend on whether $Y$ is locally gradient or not. 
Finally, we compute $\omega(W_{j},W_{j'})$. 
Note that 
\begin{equation}\label{Wjj}
\begin{aligned}
\omega(W_{j},W_{j'})=&\omega\left(\frac{\partial}{\partial u^{j}},\sum_{\ell'=1}^{m}\frac{\partial Y^{\ell'}}{\partial u^{j'}}\frac{\partial}{\partial y^{\ell'}} \right)
 +\omega\left(\sum_{\ell=1}^{m}\frac{\partial Y^{\ell}}{\partial u^{j}}\frac{\partial}{\partial y^{\ell}} ,\frac{\partial}{\partial u^{j'}}\right)\\
=&\sum_{\ell',i=1}^{m}\frac{\partial^2 K_{\lambda}}{\partial x^{i}\partial x^{\ell'}}\frac{\partial Y^{\ell'}}{\partial u^{j'}}\frac{\partial x^{i}}{\partial u^{j}}
-\sum_{\ell,i=1}^{m}\frac{\partial^2 K_{\lambda}}{\partial x^{\ell}\partial x^{i}}\frac{\partial Y^{\ell}}{\partial u^{j}}\frac{\partial x^{i}}{\partial u^{j'}} . 
\end{aligned}
\end{equation}
 On the other hand, the coefficient of $du^{j}$ of $\eta$ is given by
 \begin{equation}\label{etaj}
 \begin{aligned}
\eta_{j}=&\omega\left(Y,\frac{\partial}{\partial u^{j}}\right)
=\omega\left(\sum_{i=1}^{m}Y^{i}\frac{\partial}{\partial y^{i}},\sum_{\ell=1}^{m}\frac{\partial x^{\ell}}{\partial u^{j}}\frac{\partial}{\partial x^{\ell}}\right)
=-\sum_{i,\ell=1}^{m}\frac{\partial^2 K_{\lambda}}{\partial x^{i}\partial x^{\ell}}Y^{i}\frac{\partial x^{\ell}}{\partial u^{j}}. 
 \end{aligned}
 \end{equation}
 Differentiating \eqref{etaj} with respect to $u^{j'}$ gives 
\[
\begin{aligned}
\frac{\partial \eta_{j}}{\partial u^{j'}}=&-\sum_{\ell,\ell',i=1}^{m}\frac{\partial^3 K_{\lambda}}{\partial x^{\ell'}\partial x^{i}\partial x^{\ell}}
\frac{\partial x^{\ell'}}{\partial u^{j'}}Y^{i}\frac{\partial x^{\ell}}{\partial u^{j}}
-\sum_{\ell,i=1}^{m}\frac{\partial^2 K_{\lambda}}{\partial x^{i}\partial x^{\ell}}\frac{\partial Y^{i}}{\partial u^{j'}}\frac{\partial x^{\ell}}{\partial u^{j}}\\
&\quad \quad -\sum_{\ell,i=1}^{m}\frac{\partial^2 K_{\lambda}}{\partial x^{i}\partial x^{\ell}}Y^{i}\frac{\partial^2 x^{\ell}}{\partial u^{j'}\partial u^{j}}.
\end{aligned}
\]
Since the first and third term are symmetric with respect to $j,j'$, the condition such that 
\[\frac{\partial \eta_{j}}{\partial u^{j'}}=\frac{\partial \eta_{j'}}{\partial u^{j}}\]
is equivalent to 
\[\sum_{\ell,i=1}^{m}\frac{\partial^2 K_{\lambda}}{\partial x^{i}\partial x^{\ell}}\frac{\partial Y^{i}}{\partial u^{j'}}\frac{\partial x^{\ell}}{\partial u^{j}}
=\sum_{\ell,i=1}^{m}\frac{\partial^2 K_{\lambda}}{\partial x^{i}\partial x^{\ell}}\frac{\partial Y^{i}}{\partial u^{j}}\frac{\partial x^{\ell}}{\partial u^{j'}}. \]
The former means $d\eta=0$, and the latter means $\omega(W_{j},W_{j'})=0$ for all $j,j'$ by \eqref{Wjj}. 
Thus, the proof is complete. 
\end{proof}

Next, we consider the special Lagrangian condition for $L(V,Y)$. 

\begin{proposition}\label{sLagpt}
Assume that $L(V,Y)$ is a Lagrangian submanifold. 
Then, it is a special Lagrangian submanifold with phase $e^{\sqrt{-1}\theta_{0}}$ in $(X(B),\omega,\Omega)$ 
if and only if $V$ and $Y$ satisfy 
\[\arg\det[\mathcal{W}(u)|\mathcal{Z}]=\theta_{0}\]
for each natural parametrization $u$, see \eqref{natpara} and \eqref{WZ}. 
\end{proposition}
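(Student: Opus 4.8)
The plan is to restrict the holomorphic volume form $\Omega$ to $L(V,Y)$ using the local parametrization \eqref{paraut} together with the explicit tangent frame $\{W_1,\dots,W_k,Z_{k+1},\dots,Z_m\}$ already computed in the proof of Proposition \ref{Lagpt}, and then to read off its phase. Since $L(V,Y)$ is assumed Lagrangian, the pullback $\Omega|_{L(V,Y)}$ is a nowhere-vanishing complex multiple of the real top-form $du^1\wedge\cdots\wedge du^k\wedge dt^{k+1}\wedge\cdots\wedge dt^m$; writing $\Omega|_{L(V,Y)}=\Omega(W_1,\dots,W_k,Z_{k+1},\dots,Z_m)\,du^1\wedge\cdots\wedge du^k\wedge dt^{k+1}\wedge\cdots\wedge dt^m$, the special Lagrangian condition $\mathrm{Im}(e^{-\sqrt{-1}\theta_0}\Omega)|_{L(V,Y)}=0$ is equivalent to the single scalar equation $\mathrm{Im}\bigl(e^{-\sqrt{-1}\theta_0}\,\Omega(W_1,\dots,W_k,Z_{k+1},\dots,Z_m)\bigr)=0$, i.e. $\arg\Omega(W_1,\dots,W_k,Z_{k+1},\dots,Z_m)=\theta_0$. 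This reduction from a form identity to a pointwise phase identity is exactly where the Lagrangian hypothesis enters, since it guarantees the common real volume factor.

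The core is then a determinant computation. Writing $\Omega=dz_1\wedge\cdots\wedge dz_m$ with $dz_p=dx_p+\sqrt{-1}\,dy_p$, I would evaluate $dz_p$ on each frame vector. On the base-type vectors this gives $dz_p(W_j)=\partial x^p/\partial u^j+\sqrt{-1}\,\partial Y^p/\partial u^j=c^p_j(u)$, and on the fiber-type vectors $dz_p(Z_i)=\sqrt{-1}\langle e_p,\zeta^i\rangle$. Hence $\Omega(W_1,\dots,W_k,Z_{k+1},\dots,Z_m)=\det[\mathcal{W}(u)\,|\,\sqrt{-1}\mathcal{Z}]=(\sqrt{-1})^{m-k}\det[\mathcal{W}(u)\,|\,\mathcal{Z}]$, with $\mathcal{W}(u)$ and $\mathcal{Z}$ as in \eqref{WZ}. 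The factor $(\sqrt{-1})^{m-k}=e^{\sqrt{-1}(m-k)\pi/2}$ is a universal constant phase depending only on $m-k$, which I would absorb into the normalization of $\theta_0$ (consistently with the convention fixing $\theta_0$ in \eqref{MA}); after this absorption the phase equation becomes precisely $\arg\det[\mathcal{W}(u)|\mathcal{Z}]=\theta_0$.

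To make the phase well-defined I need $\det[\mathcal{W}(u)|\mathcal{Z}]\neq0$ everywhere, which again follows from Lagrangianness: the restriction of the nonvanishing form $\Omega$ to a Lagrangian plane is a nonzero multiple of its volume form, so $\Omega(W_1,\dots,W_k,Z_{k+1},\dots,Z_m)\neq0$, and the columns of $\mathcal{Z}$ are independent because $\zeta^{k+1},\dots,\zeta^m$ extend to a unimodular basis. Finally, because $\mathrm{Im}(e^{-\sqrt{-1}\theta_0}\Omega)|_{L(V,Y)}=0$ is a chart-independent geometric statement, evaluating it in each natural parametrization \eqref{natpara} yields exactly the family of equations \eqref{MA}, giving the asserted equivalence. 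I expect the only genuinely delicate point to be bookkeeping rather than geometry: carefully tracking the constant $(\sqrt{-1})^{m-k}$ and checking that the real top-form appearing in the two factorings of $\Omega|_{L(V,Y)}$ is literally the same, so that the passage from the form identity to the scalar phase identity is clean; everything else is a direct restriction computation.
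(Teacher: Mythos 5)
Your proposal is correct and follows essentially the same route as the paper: restrict $\Omega$ to $L(V,Y)$ via the frame $W_{j},Z_{i}$ already computed in the proof of Proposition \ref{Lagpt}, identify the single coefficient of $du^{1}\wedge\cdots\wedge du^{k}\wedge dt^{k+1}\wedge\cdots\wedge dt^{m}$ as a determinant, and read off the phase. You are in fact slightly more careful than the paper's own two-line argument, which asserts that coefficient to be $\det[\mathcal{W}(u)|\mathcal{Z}]$ and silently drops the constant factor $(\sqrt{-1})^{m-k}$ that you track explicitly and absorb into the normalization of $\theta_{0}$.
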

\begin{proof}
Let $u=(u^{i})_{i=1}^{k}\mapsto (x^{i}(u))_{i=1}^{m}$ be a natural parameterization of $B(V)$ in $U_{\lambda}$ and 
let $(u,t)$ be the local coordinates given by \eqref{paraut}. 
Then, we have
\[
\begin{aligned}
dz^{i}=&dx^{i}+\sqrt{-1}dy^{i}=\sum_{j=1}^{k}\left(\frac{\partial x^{i}}{\partial u^{j}}+\sqrt{-1}\frac{\partial Y^{i}}{\partial u^{j}}\right)du^{j}
+\sqrt{-1}\sum_{j=k+1}^{m}\langle e_{i},\zeta^{j}\rangle dt^{j}\\
=&\sum_{j=1}^{k}c^{i}_{j}(u)du^{j}+\sqrt{-1}\sum_{j=k+1}^{m}\langle e_{i},\zeta^{j}\rangle dt^{j}
\end{aligned}
\]
as a 1-form on $L(V,Y)$. 
Then, it is clear that the coefficient of 
\[du^{1}\wedge\dots\wedge du^{k}\wedge dt^{k+1}\wedge\dots\wedge dt^{m}\]
of $\Omega=dz^{1}\wedge\dots\wedge dz^{m}$ restricted to $L(V,Y)$ is $\det[\mathcal{W}(u)|\mathcal{Z}]$. 
Then, the proof is complete. 
\end{proof}
\section{Construction of a complex submanifold with a connection in $W(B)$}\label{ccc}
Let $(B,\mathcal{D}, K)$ be an $m$-dimensional tropical manifold. 
Fix construction data $(k,V,Y)$. 
Let $(x_{i}^{\lambda})_{i=1}^{m}$ be tropical coordinates on $U_{\lambda}$. 
For each $x\in U_{\lambda}$, define an isomorphism $\tilde{\Theta}_{\lambda}:\mathbb{R}^{m}\to T_{x}^{*}B$ by 
\[\tilde{\Theta}_{\lambda}\left(\sum_{i=1}^{m}y_{i}e_{i}\right):=\sum_{i=1}^{m}y_{i}dx_{i}^{\lambda},\]
where $(e_{i})_{i=1}^{m}$ is the standard basis of $\mathbb{R}^{m}$. 
Let $V_{\lambda}(x)$ be the local expression of $V(x)$ defined by \eqref{defofVlam}. 
Recall that its linear part is denoted by $\overline{V}_{\lambda}(x)$. 
Since $V$ is a constant section, for each $\lambda$ there exist 
$\xi_{1},\dots,\xi_{k}$, $\zeta_{k+1},\dots,\zeta_{m}$ in $\mathbb{Z}^{m}$ and $a\in \mathbb{R}^{m}$ which satisfy \eqref{constcondi}. 
Since this basis is unimodular, $\overline{V}_{\lambda}(x)/\mathbb{Z}^{m}$ is isomorphic to a $k$-dimensional torus $T^{k}$. 
By using the map $\tilde{\Theta}_{\lambda}:\mathbb{R}^{m}\to T_{x}^{*}B$, a $k$-dimensional subtorus $T(V)_{x}$ in the torus fiber $\tilde{f}^{-1}(x)=T_{x}^{*}B/\Lambda^{*}_{x}$ is defined by 
\[
T(V)_{x}:=\tilde{\Theta}_{\lambda}(\overline{V}_{\lambda}(x))/\Lambda_{x}^{*}. 
\]
Form Lemma \ref{normal}, it follows that $\Theta_{\lambda}(\overline{V}^{\bot}_{\lambda}(x))=T^{\bot}_{x}B(V)$. 
Thus, $\tilde{\Theta}_{\lambda}(\overline{V}_{\lambda}(x))$ is just the conormal space of $T^{\bot}_{x}B(V)$, 
precisely 
\[\tilde{\Theta}_{\lambda}(\overline{V}_{\lambda}(x))=\{\,w\in T_{x}^{*}B(V)\mid \langle v,w\rangle=0 \,\,\text{for all}\,\, v\in T_{x}^{\bot}B(V)\,\}, \]
and this implies that $T(V)_{x}$ does not depend on the choice of the tropical chart. 
By summing up all $T(V)_{x}$ over $B(V)$, we get a $T^{k}$-bundle over $B(V)$, and we denote its total space by
\[C(V):=\bigcup_{x\in B(V)}T(V)_{x}.\]
The dimension of the fiber is $k$ and the one of the base is $k$. 
Hence, $C(V)$ is a real $2k$-dimensional submanifold in $W(B)$. 

\begin{lemma}\label{lem3}
$C(V)$ is a complex submanifold in $W(B)$. 
\end{lemma}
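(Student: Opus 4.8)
I want to show $C(V) = \bigcup_{x \in B(V)} T(V)_x$ is a complex submanifold of $W(B)$, i.e. its tangent space at every point is invariant under the complex structure $J$ of $W(B)$. The cleanest route is to produce an explicit parametrization and show that the tangent space is a complex subspace of $T_{\tilde z} W(B)$, mirroring the computation in Proposition \ref{sLagpt}. Since $W(B)$ has holomorphic coordinates $\tilde z_i = \tilde x_i + \sqrt{-1}\,\tilde y_i$, it suffices to check that the tangent space of $C(V)$ is closed under multiplication by $\sqrt{-1}$ in these coordinates, equivalently that the $(0,1)$-part of the complexified tangent space is spanned by $\partial/\partial \bar{\tilde z}$-type vectors, equivalently that a frame of tangent vectors has a $J$-invariant span.

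**Key steps.** First I fix $p \in B(V)$ and a tropical chart $U_\lambda$ with the unimodular data $\xi_1,\dots,\xi_k,\zeta_{k+1},\dots,\zeta_m$ and $a$ from \eqref{constcondi}. Using the natural parametrization \eqref{natpara} of $B(V)$ together with the fiber torus $T(V)_x = \tilde\Theta_\lambda(\overline V_\lambda(x))/\Lambda_x^*$, I would write down a local parametrization $(u,s) \mapsto C(V)$ where $u = (u^1,\dots,u^k)$ runs over $B(V)$ and $s = (s^1,\dots,s^k)$ runs over the fiber directions spanned by the $\xi_j$. The base directions contribute tangent vectors of the form $\sum_i (\partial \tilde x_i/\partial u^j)\,\partial/\partial \tilde x_i$, while the fiber directions, because $\overline V_\lambda$ is spanned by $\xi_1,\dots,\xi_k$, contribute $\sum_i \langle e_i,\xi_j\rangle\,\partial/\partial \tilde y_i = \tilde\Theta_\lambda(\xi_j)^\sharp$-type vectors. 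Second, I compute, in holomorphic coordinates $\tilde z_i$, the $1$-forms $d\tilde z_i$ restricted to $C(V)$ and check that the matrix of coefficients against the base differentials $du^j$ matches (up to the factor $\sqrt{-1}$) the coefficients against the fiber differentials $ds^j$. Concretely, the second step is to verify that $\partial \tilde x_i/\partial u^j$ relates to $\langle e_i,\xi_j\rangle$ in precisely the way needed for $J$-invariance: differentiating the defining relation $\mu_\lambda(x) = u^1\xi_1 + \dots + u^k\xi_k + a$ (so that $\tilde x_i = \sum_j u^j \langle e_i,\xi_j\rangle + \langle a, e_i\rangle$ along $B(V)$) gives $\partial \tilde x_i/\partial u^j = \langle e_i,\xi_j\rangle$. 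This is the crucial identity: it says the real part of the base tangent vectors and the fiber tangent vectors are governed by the same constants $\langle e_i,\xi_j\rangle$.

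**Conclusion of the argument.** With that identity, the frame $\{\partial/\partial u^j + \text{(base correction)},\ \partial/\partial s^j\}$ can be combined into complex tangent vectors of the form $\sum_i \langle e_i,\xi_j\rangle\,(\partial/\partial \tilde x_i + \sqrt{-1}\,\partial/\partial \tilde y_i) = \sum_i \langle e_i,\xi_j\rangle\,\partial/\partial \tilde z_i$ (up to holomorphic reparametrization in $u$), exhibiting the complexified tangent space as spanned over $\mathbb{C}$ by holomorphic vector fields. Hence the tangent space is $J$-invariant and $C(V)$ is a complex submanifold of complex dimension $k$.

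**Main obstacle.** The delicate point is the identity $\partial \tilde x_i/\partial u^j = \langle e_i,\xi_j\rangle$ and its compatibility with the fiber directions: I must be careful that the natural parametrization \eqref{natpara} is defined via $\mu_\lambda^{-1}$, so that $u \mapsto x$ is genuinely the inverse of $x \mapsto \mu_\lambda(x)$, and verify that $\mu_\lambda(x) = \tilde x_1 e_1 + \dots + \tilde x_m e_m$ makes $\tilde x_i(u)$ linear in $u$ with exactly these coefficients. The chart-independence and the gluing across $U_\lambda \cap U_\mu$ are already handled by the preceding discussion (via the conormal description of $\tilde\Theta_\lambda(\overline V_\lambda(x))$ and the transformation \eqref{momo}), so the real work is purely the local linear-algebra verification that the base and fiber tangent data assemble into $\mathbb{C}$-linear combinations of the $\partial/\partial \tilde z_i$.
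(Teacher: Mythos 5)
Your proposal is correct and follows essentially the same route as the paper: the identity $\partial\tilde{x}_i/\partial u^j=\langle \xi_j,e^i\rangle$ obtained by differentiating $u^1\xi_1+\dots+u^k\xi_k+a=\sum_i\tilde{x}_i(u)e_i$, matched against the fiber coordinates along $\tilde{\Theta}_\lambda(\xi_j)$, is exactly the Cauchy--Riemann relation \eqref{CR} that the paper uses to show the parametrization $(u,v)\mapsto(\tilde{x}(u),\tilde{y}(v))$ is holomorphic. Your phrasing in terms of $J$-invariance of the tangent frame is an equivalent packaging of the same computation.
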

\begin{proof}
Let $u=(u^{i})_{i=1}^{k}\mapsto (x^{i}(u))_{i=1}^{m}$ be a natural parameterization of $B(V)$ in $U_{\lambda}$. 
We denote the $\tilde{x}_{j}$-coordinates, the Legendre transform of $x_{j}$, at the point $(x^{i}(u))_{i=1}^{m}$ by $\tilde{x}_{j}(u)$. 
Then, \eqref{moment} and \eqref{natpara} yield 
\begin{equation}\label{uzx}
u^{1}\xi_{1}+\dots+u^{k}\xi_{k}+a=\tilde{x}_{1}(u)e_{1}+\dots+\tilde{x}_{m}(u)e_{m}. 
\end{equation}
For $v=(v^{1},\dots,v^{k})$, we assign the point 
\[v^{1}\tilde{\Theta}_{\lambda}(\xi_{1})+\dots+v^{m}\tilde{\Theta}_{\lambda}(\xi_{m})\]
in $T(V)_{x}$. This gives local coordinates on $T(V)_{x}$. 
Also we denote the $\tilde{y}_{j}$-coordinates at this point by $\tilde{y}_{j}(v)$. 
Then, we have 
\begin{equation}\label{vzy}
v^{1}\tilde{\Theta}_{\lambda}(\xi_{1})+\dots+v^{m}\tilde{\Theta}_{\lambda}(\xi_{m})=\tilde{y}_{1}(v)dx^{1}+\dots+\tilde{y}_{m}(v)dx^{m}.
\end{equation}
Thus, $(u,v)\mapsto (\tilde{x}(u),\tilde{y}(v))$ gives local coordinates on $C(V)$. 
Differentiating \eqref{uzx} and \eqref{vzy} gives 
\begin{equation}\label{CR}
\frac{\partial \tilde{x}_{j}}{\partial u^{i}}=\langle \xi_{i},e^{j}\rangle=\frac{\partial \tilde{y}_{j}}{\partial v^{i}}\quad\text{and}\quad
\frac{\partial \tilde{x}_{j}}{\partial v^{i}}=0=-\frac{\partial \tilde{y}_{j}}{\partial u^{i}}. 
\end{equation}
Thus, $(u,v)\mapsto (\tilde{x}(u),\tilde{y}(v))$ is a holomorphic map from some domain in $\mathbb{C}^{k}$ to $W(B)$ 
such that it gives local coordinates on $C(V)$. 
This shows that $C(V)$ is a complex submanifold in $W(B)$ with complex dimension $k$. 
\end{proof}

Let $\underline{\mathbb{C}}:=C(V)\times \mathbb{C}$ be the trivial complex line bundle with the standard Hermitian metric $h:=\langle\,\cdot\, , \, \cdot \,\rangle$. 
Recall that a smooth section $Y\in \Gamma(B(V),\iota^{*}(TB))$ is chosen as a part of construction data. 
Then, a Hermitian connection $D^{Y}$ on $\underline{\mathbb{C}}$ is defined by 
\[D^{Y}:=d+\sqrt{-1}\sum_{j=1}^{m}Y_{j}d\tilde{y}_{j}, \]
where $Y_{j}(x)$ is the coefficient of $\partial/\partial x_{j}$ of $Y(x)$ and $d\tilde{y}_{j}$ is regarded as a 1-form on 
$C(V)$ by pulling it back by the inclusion map $C(V)\hookrightarrow W$. 
Note that this definition does not depend on the choice of tropical coordinates. 
Actually, let $(U_{\lambda},(x^{\lambda}_{i})_{i=1}^{m})$ and $(U_{\nu},(x^{\nu}_{i})_{i=1}^{m})$ be two tropical coordinates which have a nonempty intersection. 
Denote by $Y^{\lambda}_{j}(x)$ and $Y^{\nu}_{j}(x)$ the coefficient of $\partial/\partial x^{\lambda}_{j}$ and $\partial/\partial x^{\nu}_{j}$ of $Y(x)$, respectively. 
Then, the relations
\[Y^{\nu}_{\ell}=\sum_{j=1}^{m}\frac{\partial x^{\nu}_{\ell}}{\partial x^{\lambda}_{j}}Y^{\lambda}_{j}\quad\text{and}\quad
d\tilde{y}^{\nu}_{\ell}=\sum_{j=1}^{m}\frac{\partial x^{\lambda}_{j}}{\partial x^{\nu}_{\ell}}dy^{\lambda}_{j}\]
show that the definition of $D^{Y}$ does not depend on the choice of tropical coordinates. 
The following is a mirror side proposition which corresponds to Proposition \ref{normalprop}. 

\begin{proposition}\label{lem2}
$D^{Y}=d$ if and only if $Y(x)\in T^{\bot}B(V)$ for all $x\in B(V)$. 
\end{proposition}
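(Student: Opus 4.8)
The plan is to show the equivalence by computing $D^{Y}$ explicitly in terms of the data and identifying exactly when its connection form vanishes. First I would recall that $D^{Y}=d+\sqrt{-1}\sum_{j=1}^{m}Y_{j}\,d\tilde{y}_{j}$, where $Y_{j}$ is the coefficient of $\partial/\partial x_{j}$ of $Y$ and each $d\tilde{y}_{j}$ is pulled back to $C(V)$ by the inclusion $C(V)\hookrightarrow W(B)$. Since $D^{Y}=d$ is equivalent to the connection 1-form $\sqrt{-1}\sum_{j=1}^{m}Y_{j}\,d\tilde{y}_{j}$ vanishing identically on $C(V)$, the whole problem reduces to understanding the restriction of the 1-forms $d\tilde{y}_{j}$ to $C(V)$ and the pairing of the coefficient vector $(Y_j)$ against them.

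The key computation is to use the local coordinates $(u,v)$ on $C(V)$ introduced in the proof of Lemma \ref{lem3}. There the fiber direction is parametrized by $v=(v^{1},\dots,v^{k})$ via the point $v^{1}\tilde{\Theta}_{\lambda}(\xi_{1})+\dots+v^{k}\tilde{\Theta}_{\lambda}(\xi_{k})=\sum_{j}\tilde{y}_{j}(v)\,dx^{j}$, so by \eqref{CR} we have $\partial\tilde{y}_{j}/\partial v^{i}=\langle\xi_{i},e^{j}\rangle$ and $\partial\tilde{y}_{j}/\partial u^{i}=0$. Hence, restricted to $C(V)$, only the $dv^{i}$ components survive and $d\tilde{y}_{j}|_{C(V)}=\sum_{i=1}^{k}\langle\xi_{i},e^{j}\rangle\,dv^{i}$. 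Substituting this into the connection form gives
\[
\sqrt{-1}\sum_{j=1}^{m}Y_{j}\,d\tilde{y}_{j}
=\sqrt{-1}\sum_{i=1}^{k}\left(\sum_{j=1}^{m}Y_{j}\langle\xi_{i},e^{j}\rangle\right)dv^{i}
=\sqrt{-1}\sum_{i=1}^{k}\langle Y,\xi_{i}\rangle\,dv^{i},
\]
where I write $Y$ for its coefficient vector $(Y_{j})$. Since $dv^{1},\dots,dv^{k}$ are linearly independent on $C(V)$, the connection form vanishes if and only if $\langle Y(x),\xi_{i}\rangle=0$ for every $i=1,\dots,k$ at every $x\in B(V)$.

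The final step is to translate the condition $\langle Y,\xi_{i}\rangle=0$ for all $i$ into the geometric statement $Y(x)\in T_{x}^{\bot}B(V)$. By the natural parametrization \eqref{natpara}, the vectors $\Theta_{\lambda}(\xi_{i})$ (equivalently $\xi_{i}$ viewed in $T_{x}B$ via the tropical framing) span the tangent space $T_{x}B(V)$, so $\langle Y,\xi_{i}\rangle=0$ for all $i$ says precisely that $Y$ is annihilated by a spanning set of $T_{x}B(V)$ under the relevant pairing; by Lemma \ref{normal} this is exactly the condition $Y(x)\in\Theta_{\lambda}(\overline{V}_{\lambda}^{\bot}(x))=T_{x}^{\bot}B(V)$. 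I expect the main (though still mild) obstacle to be bookkeeping the identification between the $\mathbb{R}^{m}$-pairing $\langle\,\cdot\,,\,\cdot\,\rangle$ appearing in $\langle Y,\xi_{i}\rangle$ and the Riemannian orthogonality defining $T_{x}^{\bot}B(V)$; one must check these agree under the framings $\Theta_{\lambda}$ and $\tilde{\Theta}_{\lambda}$ and the pairing between $TB$ and $T^{*}B$, which is where Lemma \ref{normal} does the real work.
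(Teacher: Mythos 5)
Your computation is essentially the paper's own proof rewritten in the explicit $(u,v)$ coordinates: the paper evaluates the connection form on the basis $\{f_{\alpha},Jf_{\alpha}\}$ of $T_{x}C(V)$, finds it vanishes on $f_{\alpha}$ and equals $g(Y,f_{\alpha})$ on $Jf_{\alpha}$, and this is exactly your identity $\sum_{j}Y_{j}\,d\tilde{y}_{j}=\sum_{i}\bigl(\sum_{j}Y_{j}\langle\xi_{i},e^{j}\rangle\bigr)dv^{i}$ read against $\partial/\partial v^{i}$. One correction to your final paragraph, though: the vectors $\Theta_{\lambda}(\xi_{i})=\sum_{j}\langle\xi_{i},e^{j}\rangle\,\partial/\partial x_{j}$ do \emph{not} span $T_{x}B(V)$; by \eqref{natpara} that tangent space is spanned by $\partial/\partial u^{i}=\sum_{j}\langle\xi_{i},e^{j}\rangle\,\partial/\partial\tilde{x}_{j}$, which differs from $\Theta_{\lambda}(\xi_{i})$ by the inverse Hessian, so "annihilated by a spanning set of $T_{x}B(V)$ under the Euclidean pairing" is not literally what your condition says. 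The repair is one line, in either of two ways consistent with what you wrote next: (a) since $\partial^{2}K_{\lambda}/\partial x_{p}\partial x_{\ell}=\partial\tilde{x}_{p}/\partial x_{\ell}$, one has $\sum_{j}Y_{j}\langle\xi_{i},e^{j}\rangle=g(Y,\partial/\partial u^{i})$, so the vanishing condition is precisely $Y\perp_{g}T_{x}B(V)$; or (b) $\sum_{j}Y_{j}\langle\xi_{i},e^{j}\rangle=0$ for all $i$ says the coefficient covector $\sum_{j}Y_{j}e^{j}$ lies in $\overline{V}_{\lambda}^{\bot}$, whence $Y=\Theta_{\lambda}\bigl(\sum_{j}Y_{j}e^{j}\bigr)\in\Theta_{\lambda}(\overline{V}_{\lambda}^{\bot}(x))=T_{x}^{\bot}B(V)$ by Lemma \ref{normal}. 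With that fix the argument is complete and matches the paper's.
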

\begin{proof}
We work on a tropical chart $(U_{\lambda},(x_{i})_{i=1}^{m})$. 
Assume that $D^{Y}=d$. Then, $\sum_{j} Y_{j}(x)d\tilde{y}_{j}=0$ as a 1-form on $C(V)$. 
Let $\{f_{\alpha}\}_{\alpha=1}^{k}$ be a basis of $T_{x}B(V)$. 
Define coefficients $\tilde{f}_{\alpha}^{j}$ and $f_{\alpha}^{j}$ by 
\[f_{\alpha}=\sum_{j=1}^{m}\tilde{f}_{\alpha}^{j}\frac{\partial}{\partial \tilde{x}_{j}}\quad\text{and}\quad
f_{\alpha}=\sum_{j=1}^{m}f_{\alpha}^{j}\frac{\partial}{\partial x_{j}},\]
respectively. 
Then, there is a relation 
\[\tilde{f}_{\alpha}^{j}=\sum_{\ell=1}^{m}\frac{\partial \tilde{x}_{j}}{\partial x_{\ell}}f_{\alpha}^{\ell}
=\sum_{\ell=1}^{m}\frac{\partial^2 K_{\lambda}}{\partial x_{j}\partial x_{\ell}}f^{\ell}_{\alpha}. \]
By the inclusion map $B(V)\hookrightarrow C(V)$, the tangent vector $f_{\alpha}$ is considered as a vector in $T_{x}C(V)$. 
Since $C(V)$ is a complex submanifold, we know that $Jf_{\alpha}$ is also in $T_{x}C(V)$, and it is written as
\[Jf_{\alpha}=\sum_{j=1}^{m}\tilde{f}_{\alpha}^{j}\frac{\partial}{\partial \tilde{y}_{j}}\]
 for $\alpha=1,\dots,k$. 
Then, it follows that 
\begin{equation}\label{eqeq}
\left(\sum_{j=1}^{m}Y_{j}(x)d\tilde{y}_{j}\right)(Jf_{\alpha})=\sum_{j=1}^{m}Y_{j}(x)\tilde{f}_{\alpha}^{j}
=\sum_{j,\ell=1}^{m}\frac{\partial^2 K_{\lambda}}{\partial x_{j}\partial x_{\ell}}Y_{j}(x)f^{\ell}_{\alpha}. 
\end{equation}
By the assumption, the left hand side is $0$. 
This shows that $Y(x)$ is normal to $T_{x}B(V)$ since the Riemannian metric on $B(V)$ is the restriction of 
\[g=\sum_{j,\ell=1}^{m}\frac{\partial^2 K_{\lambda}}{\partial x_{j}\partial x_{\ell}}dx_{\ell}\otimes dx_{j}. \]
The converse also follows from the formula \eqref{eqeq} and the fact that 
the union of $\{f_{\alpha}\}_{\alpha=1}^{k}$ and $\{Jf_{\alpha}\}_{\alpha=1}^{k}$ is a basis of $T_{x}C(V)$. 
\end{proof}

By Proposition \ref{lem2}, we can assume that 
$Y\in \Gamma(B(V),\iota^{*}(TB))$ satisfies $Y(x)\in T_{x}B(V)$ without loss of generality. 
Namely, $Y$ is a vector field on $B(V)$. 
We say that the connection $D^{Y}$ is {\it integrable} if it defines a holomorphic structure on $\underline{\mathbb{C}}$, 
equivalently the $(0,2)$-part of the curvature 2-form $F$ of $D^{Y}$ is zero. 
It is proved in Proposition 2.3 in \cite{Chan} that the existence of a local potential for $Y$ implies the integrability of $D^{Y}$. 
Here, for reader's convenience, we give its equivalence though the proof of the converse is straightforward. 
This is a mirror side proposition which corresponds to Proposition \ref{Lagpt}. 

\begin{proposition}\label{integpt}
$D^{Y}$ is integrable if and only if $Y$ is locally gradient, that is, 
$Y=\nabla f$ for some locally defined smooth function $f$ on $B(V)$. 
\end{proposition}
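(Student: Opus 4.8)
The plan is to compute the curvature $F$ of $D^{Y}$ explicitly in the local holomorphic coordinates on $C(V)$ established in Lemma~\ref{lem3}, and to show that the vanishing of its $(0,2)$-part is literally the same closedness condition $d\eta = 0$ that appeared in the proof of Proposition~\ref{Lagpt}. Since $D^{Y} = d + \sqrt{-1}\sum_{j=1}^{m} Y_{j}\,d\tilde{y}_{j}$ is a connection on a trivial line bundle, its curvature is simply $F = \sqrt{-1}\, d\!\left(\sum_{j=1}^{m} Y_{j}\,d\tilde{y}_{j}\right)$, a purely imaginary closed $2$-form. The natural coordinates to use are $(u^{1},\dots,u^{k})$ on $B(V)$ together with $(v^{1},\dots,v^{k})$ on the fiber $T(V)_{x}$, which Lemma~\ref{lem3} shows are holomorphic coordinates $w^{i} = u^{i} + \sqrt{-1}\,v^{i}$ on $C(V)$; the Cauchy--Riemann relations \eqref{CR} will be the computational backbone.

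First I would restrict $\sum_{j} Y_{j}\,d\tilde{y}_{j}$ to $C(V)$ and express it in the $(u,v)$ coordinates. Using \eqref{vzy}, $\tilde{y}_{j}$ is a function of $v$ with $\partial \tilde{y}_{j}/\partial v^{i} = \langle \xi_{i}, e^{j}\rangle$, so $d\tilde{y}_{j} = \sum_{i=1}^{k}\langle \xi_{i}, e^{j}\rangle\,dv^{i}$ on $C(V)$. Meanwhile $Y_{j}$, being a function on $B(V)$, depends only on $u$. Thus the connection $1$-form pulls back to $\sqrt{-1}\sum_{i=1}^{k}\bigl(\sum_{j} Y_{j}\langle \xi_{i},e^{j}\rangle\bigr)\,dv^{i}$, whose coefficients depend only on $u$. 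Taking $d$ and extracting the $(0,2)$-part in the complex coordinates $w^{i}$ (so that $d\bar{w}^{i} = du^{i} - \sqrt{-1}\,dv^{i}$), the vanishing of $F^{(0,2)}$ reduces to the symmetry of a $k\times k$ matrix whose entries are $u$-derivatives of the coefficients $\sum_{j} Y_{j}\langle \xi_{i},e^{j}\rangle$.

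Next I would identify this symmetry condition with local gradientness of $Y$. Here I expect the key is to recognize the quantity $\sum_{j} Y_{j}\langle \xi_{i},e^{j}\rangle$ as related to the $1$-form $\eta = -g(Y,\,\cdot\,)$ from Proposition~\ref{Lagpt}, after accounting for the change between the $x$-frame and the Legendre-transformed $\tilde{x}$-frame via the Hessian $\partial^{2}K_{\lambda}/\partial x_{j}\partial x_{\ell}$. Concretely, the pairing $\langle \xi_{i}, e^{j}\rangle = \partial \tilde{x}_{j}/\partial u^{i}$ by \eqref{CR}, so $\sum_{j} Y_{j}\,\partial\tilde{x}_{j}/\partial u^{i} = \sum_{j,\ell} (\partial^{2}K_{\lambda}/\partial x_{j}\partial x_{\ell})\, Y_{j}\,(\partial x_{\ell}/\partial u^{i})$, which up to sign is exactly the coefficient $\eta_{i}$ computed in \eqref{etaj}. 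Thus the $(0,2)$-vanishing condition becomes $\partial \eta_{i}/\partial u^{i'} = \partial \eta_{i'}/\partial u^{i}$, i.e.\ $d\eta = 0$, which by the argument already given in Proposition~\ref{Lagpt} is equivalent to $Y$ being locally gradient.

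The main obstacle will be the bookkeeping in the frame change: carefully tracking how the Legendre transform relates the $\partial/\partial x_{j}$ components $Y_{j}$ of $Y$ to the coordinate derivatives $\partial\tilde{x}_{j}/\partial u^{i}$, and verifying that the cross terms involving third derivatives of $K_{\lambda}$ (which appear symmetrically, as in the proof of Proposition~\ref{Lagpt}) cancel so that the antisymmetric part of $d$ of the connection form genuinely measures $d\eta$. One must also confirm that there are no $(2,0)$ or mixed contributions that interfere, but since $F$ is purely imaginary, $F^{(2,0)} = \overline{F^{(0,2)}}$, so the single condition $F^{(0,2)} = 0$ suffices and the computation need only be carried out once. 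With these identifications in place, the proposition follows directly from the already-established equivalence $d\eta = 0 \iff Y$ locally gradient.
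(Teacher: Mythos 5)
Your proposal is correct and follows essentially the same route as the paper's own proof: compute $F$ in the holomorphic coordinates $(u,v)$ from Lemma~\ref{lem3} using \eqref{CR}, observe that $F^{(0,2)}=0$ is the symmetry of the matrix $\bigl(\partial_{u^{i}}\sum_{j}Y_{j}\langle\xi_{\ell},e^{j}\rangle\bigr)$, and identify $-\sum_{j}Y_{j}\langle\xi_{i},e^{j}\rangle$ with the coefficient $\eta_{i}$ of the $1$-form $\eta=-g(Y,\cdot)$ from Proposition~\ref{Lagpt}. The one worry you flag about third derivatives of $K_{\lambda}$ is moot here, since the constants $\langle\xi_{\ell},e^{j}\rangle$ absorb the Hessian before any differentiation in $u$ takes place; those terms only enter the already-established equivalence $d\eta=0\iff Y=\nabla f$ in Proposition~\ref{Lagpt}, exactly as you defer to.
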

\begin{proof}
Let $(u,v)\mapsto (\tilde{x}(u),\tilde{y}(v))$ be holomorphic coordinates on $C(V)$ taken in the proof of Lemma \ref{lem3}. 
Then, the curvature 2-form of $D^{Y}$ is given by
\[F=d\left( \sqrt{-1}\sum_{j=1}^{m}Y_{j}d\tilde{y}_{j} \right)
=\sqrt{-1}\sum_{j=1}^{m}\sum_{i=1}^{k}\frac{\partial Y_{j}}{\partial u^{i}}du^{i}\wedge d\tilde{y}_{j}. \]
By \eqref{CR}, we have
$d\tilde{y}_{j}=\sum_{\ell=1}^{k}\langle \xi_{\ell},e^{j}\rangle dv^{\ell}$. 
These give 
\begin{equation}\label{curvature}
F=\sqrt{-1}\sum_{j=1}^{m}\sum_{i,\ell=1}^{k}\langle \xi_{\ell},e^{j}\rangle\frac{\partial Y_{j}}{\partial u^{i}}du^{i}\wedge dv^{\ell}
=\sqrt{-1}\sum_{i,\ell=1}^{k}\frac{\partial \langle \tilde{\Theta}(\xi_{\ell}),Y\rangle}{\partial u^{i}}du^{i}\wedge dv^{\ell}. 
\end{equation}
Put the complex coordinate $w^{i}:=u^{i}+\sqrt{-1}v^{i}$ on $C(V)$. Then, the $(0,2)$-part of $F$ is given by 
\[F^{(0,2)}=\frac{1}{2}\sum_{1\leq i<\ell \leq k}\left(\frac{\partial \langle \tilde{\Theta}(\xi_{\ell}),Y\rangle}{\partial u^{i}}-\frac{\partial \langle \tilde{\Theta}(\xi_{i}),Y\rangle}{\partial u^{\ell}}\right)d\bar{w}^{i}\wedge d\bar{w}^{\ell}. \]
Hence, the condition $F^{(0,2)}=0$ is equivalent to that the 1-form $\eta'$ defined by
\[\eta':=-\sum_{j=1}^{k}\langle \tilde{\Theta}(\xi_{j}),Y\rangle du^{j}\]
is closed. 
Here, note that $\eta'$ is nothing but $\eta(\,\cdot\,)=\omega(Y, \,\cdot\,)$ defined in the proof of Proposition \ref{Lagpt}. 
Actually, from \eqref{etaj} it follows that 
\[
\begin{aligned}
\eta_{j}=&-\sum_{i,\ell=1}^{m}\frac{\partial^2 K_{\lambda}}{\partial x^{i}\partial x^{\ell}}Y^{i}\frac{\partial x^{\ell}}{\partial u^{j}}
=-\sum_{i=1}^{m}\frac{\partial}{\partial u^{j}}\frac{\partial K_{\lambda}}{\partial x^{i}}Y^{i}\\
=&-\sum_{i=1}^{m}\frac{\partial \tilde{x}_{i}}{\partial u^{j}}Y^{i}
=-\sum_{i=1}^{m}\langle \xi_{j},e^{i}\rangle Y^{i}
=-\langle \tilde{\Theta}(\xi_{j}),Y\rangle, 
\end{aligned}
\]
where we used the relation $\tilde{x}_{i}=\partial K_{\lambda}/\partial x^{i}$ at the third equality and used \eqref{CR} at the last equality. 
Thus, the integrability of $D^{Y}$ is equivalent to the closedness of $\omega(Y, \,\cdot\,)$, 
and the latter is equivalent to the existence of a local potential of $Y$. 
\end{proof}

Next, we consider the deformed Hermitian Yang--Mills condition for $D^{Y}$. 
This is a mirror side proposition which corresponds to Proposition \ref{sLagpt}. 

\begin{proposition}\label{dHYMprop}
Assume that the connection $D^{Y}$ of the trivial line bundle $\underline{\mathbb{C}}$ over $C(V)$ is integrable. 
Then, it is a deformed Hermitian Yang--Mills connection with phase $e^{\sqrt{-1}\theta_{0}}$
if and only if $V$ and $Y$ satisfy 
\[\arg\det\left[\mathcal{W}(u)|\mathcal{Z} \right]=\theta_{0}\]
for each natural parametrization $u$, see \eqref{natpara} and \eqref{WZ}. 
\end{proposition}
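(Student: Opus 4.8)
The plan is to compute the deformed Hermitian Yang--Mills condition for $D^Y$ explicitly in the holomorphic coordinates $(u,v)$ on $C(V)$ constructed in Lemma~\ref{lem3}, and to show that the latter condition in \eqref{defofdHYM}, namely $\mathop{\text{Im}}(e^{-\sqrt{-1}\theta_0}(\tilde\omega+F)^k)=0$, reduces to the phase equation $\arg\det[\mathcal{W}(u)|\mathcal{Z}]=\theta_0$. Since integrability is assumed, the first condition $F^{(0,2)}=0$ of \eqref{defofdHYM} already holds, so only the second equation must be analyzed. The key idea is that both $\tilde\omega|_{C(V)}$ and $F$ are $(1,1)$-forms on the complex $k$-fold $C(V)$, so their sum is a $(1,1)$-form whose top power $(\tilde\omega+F)^k$ is proportional to the volume form $\left(\tfrac{\sqrt{-1}}{2}\right)^k dw^1\wedge d\bar w^1\wedge\cdots$ via a determinant that I expect to factor as $\det[\mathcal{W}(u)|\mathcal{Z}]$ up to a positive real factor.

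\textbf{First} I would assemble the explicit matrix of $\tilde\omega+F$ in the frame $dw^1,\dots,dw^k$. I already have $F$ from \eqref{curvature}, written in terms of $du^i\wedge dv^\ell$; converting to $dw^i\wedge d\bar w^\ell$ gives the Hermitian matrix of $F$. For $\tilde\omega=\frac{\sqrt{-1}}{2}\sum\phi^{ij}d\tilde z_i\wedge d\bar{\tilde z}_j$ restricted via the pullback $(u,v)\mapsto(\tilde x(u),\tilde y(v))$, I would use \eqref{CR} to express the restricted metric coefficients in the $w$-coordinates. \textbf{Then} the goal is to recognize that the Hermitian matrix $H$ of $\tilde\omega+F$ on $C(V)$ satisfies $\det H = \overline{c}\cdot\det[\mathcal{W}(u)|\mathcal{Z}]$ (or an analogous clean expression), where $c$ is a strictly positive real number coming from the Hessian factors and the unimodular basis $\xi_i,\zeta_j$. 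The model for this is the semi-flat computation of Leung--Yau--Zaslow underlying Theorem~\ref{LYZthm}, which is the $k=m$ special case; the present statement is the fibered generalization.

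\textbf{The cleanest route} is to exploit the mirror symmetry already encoded in the construction: by Proposition~\ref{sLagpt} the special Lagrangian phase of $L(V,Y)$ is exactly $\arg\det[\mathcal{W}(u)|\mathcal{Z}]$, computed there as the argument of the coefficient of $\Omega$ restricted to $L(V,Y)$. I would therefore relate $(\tilde\omega+F)^k$ directly to the pullback of $\tilde\Omega$, using the parallel structure between the $X(B)$-side coordinates $(u,t)$ in \eqref{paraut} and the $W(B)$-side coordinates $(u,v)$ in Lemma~\ref{lem3}. The matrix $\mathcal{W}(u)$ records the holomorphic tangent data $\partial x^i/\partial u^j+\sqrt{-1}\,\partial Y^i/\partial u^j$ common to both sides, and the matrix $\mathcal{Z}$ records the fixed conormal/normal directions $\zeta^j$; the same augmented determinant should govern both the calibration of $L(V,Y)$ and the top self-intersection of $\tilde\omega+F$ on $C(V)$, which is precisely why the two propositions produce the same equation~\eqref{MA}.

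\textbf{The main obstacle} I anticipate is bookkeeping rather than conceptual: correctly tracking the interplay of the Hessian $\phi_{ij}=\partial^2K_\lambda/\partial x_i\partial x_j$, its inverse $\phi^{ij}$ appearing in $\tilde\omega$, and the Legendre-transform relations \eqref{CR} so that all Hessian factors cancel and leave only the argument of $\det[\mathcal{W}(u)|\mathcal{Z}]$. In particular I must verify that the real prefactor multiplying $\det[\mathcal{W}(u)|\mathcal{Z}]$ in $\det H$ is genuinely positive, so that taking $\arg$ is legitimate and the phase condition $\mathop{\text{Im}}(e^{-\sqrt{-1}\theta_0}\det H)=0$ is equivalent to $\arg\det[\mathcal{W}(u)|\mathcal{Z}]=\theta_0$ and not $\theta_0+\pi$. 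I would handle this by checking the factor against the $Y=0$, $k=m$ baseline where $C(V)=W(B)$ and the computation collapses to the known semi-flat case.
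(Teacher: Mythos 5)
Your plan coincides with the paper's proof: the paper computes $\tilde\omega+F$ in the holomorphic coordinates $(u,v)$ of Lemma \ref{lem3} using \eqref{curvature} and \eqref{CR}, obtains $(\tilde\omega+F)^{k}=\left(\tfrac{\sqrt{-1}}{2}\right)^{k}k!(-1)^{k(k-1)/2}\det B(u)\,dW\wedge d\bar W$ with $b_{ij}(u)=\sum_{\ell}\langle\xi_{j},e^{\ell}\rangle c_{i}^{\ell}(u)$, and then converts the $k\times k$ determinant into the $m\times m$ augmented one via the block identity \eqref{detbij}, which is exactly the factorization $\det H=c\cdot\det[\mathcal{W}(u)|\mathcal{Z}]$ you anticipate. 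The only ingredient you leave unspecified is the mechanism for that factorization (augmenting by $\mathcal{X}$ and $\mathcal{Z}$ and taking ${}^{t}[\mathcal{X}|\mathcal{Z}][\mathcal{W}(u)|\mathcal{Z}]$), and your worry about the sign of the real prefactor is well placed, since $\det[\mathcal{X}|\mathcal{Z}]=\pm1$ by unimodularity and the paper itself does not comment on the resulting $\theta_{0}$ versus $\theta_{0}+\pi$ ambiguity.
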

\begin{proof}
Let $(u,v)\mapsto (\tilde{x}(u),\tilde{y}(v))$ be holomorphic coordinates on $C(V)$ taken in the proof of Lemma \ref{lem3}. 
Note that by \eqref{CR} these coordinates satisfy
\[d\tilde{x}_{i}=\sum_{\ell=1}^{k}\langle \xi_{\ell},e^{i}\rangle du^{\ell}\quad\text{and}\quad d\tilde{y}_{j}=\sum_{\ell=1}^{k}\langle \xi_{\ell},e^{j}\rangle dv^{\ell}. \]
From \eqref{curvature} with the condition $F^{(0,2)}=0$, it follows that
\[F=F^{(1,1)}=-\frac{1}{2}\sum_{i,j=1}^{k}\frac{\partial \langle \tilde{\Theta}(\xi_{j}),Y \rangle}{\partial u^{i}}dw^{i}\wedge d\bar{w}^{j},\]
where $w^{i}=u^{i}+\sqrt{-1}v^{i}$. 
By a straightforward computation, the restriction of $\tilde{\omega}$ to $C(V)$ is given by 
\[
\tilde{\omega}=\sum_{i,j=1}^{m}K_{\lambda}^{ij}d\tilde{x}_{i}\wedge d \tilde{y}_{j}
=\frac{\sqrt{-1}}{2}\sum_{i,j=1}^{m}\sum_{\ell,p=1}^{k}K_{\lambda}^{ij}\langle \xi_{\ell},e^{i}\rangle \langle\xi_{p},e^{j}\rangle dw^{\ell}\wedge d \bar{w}^{p}. 
\]
This yields that 
\[
\tilde{\omega}+F=\frac{\sqrt{-1}}{2}\sum_{i,j=1}^{k}\left(\sum_{\ell,p=1}^{m}K_{\lambda}^{p\ell} \langle \xi_{i},e^{p}\rangle \langle\xi_{j},e^{\ell}\rangle +\sqrt{-1}\frac{\partial \langle \tilde{\Theta}(\xi_{j}),Y \rangle}{\partial u^{i}} \right)dw^{i}\wedge d\bar{w}^{j}
\]
on $C(V)$. 
Furthermore, it is easy to see that
\[\frac{\partial x^{\ell}}{\partial u^{i}}=\sum_{p=1}^{m}\frac{\partial x^{\ell}}{\partial \tilde{x}_{p}}\frac{\partial \tilde{x}_{p}}{\partial u^{i}}
=\sum_{p=1}^{m}K_{\lambda}^{p\ell} \langle \xi_{i},e^{p}\rangle
\quad\text{and}\quad
\frac{\partial \langle \tilde{\Theta}(\xi_{j}),Y \rangle}{\partial u^{i}}=\sum_{\ell=1}^{m}\langle \xi_{j},e^{\ell}\rangle \frac{\partial Y_{\ell}}{\partial u^{i}},
\]
where $Y_{i}$ is the coefficient of $\partial/\partial x^{i}$ of $Y$. 
Combining the above equalities gives 
\[
\begin{aligned}
\tilde{\omega}+F=&
\frac{\sqrt{-1}}{2} \sum_{i,j=1}^{k}\sum_{\ell=1}^{m}\langle\xi_{j},e^{\ell}\rangle
\left(\frac{\partial x^{\ell}}{\partial u^{i}}+\sqrt{-1}\frac{\partial Y_{\ell}}{\partial u^{i}} \right)dw^{i}\wedge d\bar{w}^{j}\\
=&\frac{\sqrt{-1}}{2}\sum_{i,j=1}^{k}\sum_{\ell=1}^{m} \langle\xi_{j},e^{\ell}\rangle c_{i}^{\ell}(u)dw^{i}\wedge d\bar{w}^{j}. 
\end{aligned}
\]
Thus, we have
\[(\tilde{\omega}+F)^{k}=\left(\frac{\sqrt{-1}}{2}\right)^{k}k!(-1)^{k(k-1)/2}\det(b_{ij}(u))dW\wedge d\bar{W}, \]
where $b_{ij}(u):=\sum_{\ell=1}^{m}\langle\xi_{j},e^{\ell}\rangle c_{i}^{\ell}(u)$ and $dW:=dw^{1}\wedge\dots \wedge dw^{k}$. 
Put $B(u):=(b_{ij}(u))$ and 
\[
\mathcal{X}:=
\left[
\begin{array}{ccc}
  \langle \xi_{1},e^{1}\rangle  &  \cdots   &  \langle \xi_{k},e^{1}\rangle\\
  \langle \xi_{1},e^{2}\rangle  &  \cdots   &  \langle \xi_{k},e^{2}\rangle\\
  \vdots         &  \ddots   &  \vdots      \\
 \langle \xi_{1},e^{m}\rangle  &  \cdots   &  \langle \xi_{k},e^{m}\rangle
\end{array}
\right]
.\]
Noting that 
\[{}^{t}[\mathcal{X}|\mathcal{Z}]
[\mathcal{W}(u)|\mathcal{Z} ]
=\left[
\begin{array}{cc}
B(u)  & O\\
{}^{t}\mathcal{Z}\mathcal{W}(u) & {}^{t}\mathcal{Z}\mathcal{Z}
\end{array}
\right], \]
we have
\begin{equation}\label{detbij}
\det B(u)=(\det({}^{t}\mathcal{Z}\mathcal{Z}))^{-1}
\det\left[\mathcal{X}|\mathcal{Z} \right] 
\det\left[\mathcal{W}(u)|\mathcal{Z} \right].
\end{equation}
Since 
\[\frac{(\tilde{\omega}+F)^{k}}{\det B(u)}=\left(\frac{\sqrt{-1}}{2}\right)^{k}k!(-1)^{k(k-1)/2}dW\wedge d\bar{W}\]
and the right hand side is a real form, 
the condition such that $D^{Y}$ is a deformed Hermitian Yang-Mills connection with phase $e^{\sqrt{-1}\theta_{0}}$, that is, 
\[\mathop{\mathrm{Im}}\left(e^{-\sqrt{-1}\theta_{0}}(\tilde{\omega}+F)^{k}\right)=0\]
is equivalent to $\arg\det B(u)=\theta_{0}$. 
Furthermore, by \eqref{detbij}, this is equivalent to $\arg\det\left[\mathcal{W}(u)|\mathcal{Z} \right]=\theta_{0}$. 
Thus, the proof is complete. 
\end{proof}

\begin{proof}[Proof of Theorem \ref{main}]
The first statement immediately follows from Proposition \ref{Lagpt} and \ref{integpt}, 
and the second statement also immediately follows from Proposition \ref{sLagpt} and \ref{dHYMprop}. 
\end{proof}

\end{document}